\documentclass[11pt]{amsart}

\usepackage{esvect}
\usepackage{amssymb}
\usepackage{amsmath}
\usepackage{graphicx}
\usepackage{verbatim}
\usepackage{comment}
\usepackage[margin=0.75in]{geometry}
\usepackage{caption}
\usepackage{subcaption}
\usepackage{placeins}

\usepackage{bookmark}
\usepackage{hyperref}
\usepackage{graphicx}
\usepackage{float}
\usepackage{amsmath,amsthm,amssymb}
\usepackage{amsmath,amssymb}
\usepackage{epstopdf}
\usepackage{color}
\usepackage{wrapfig}
\usepackage{marvosym}
\usepackage{caption}
\usepackage{cleveref}
\usepackage{ragged2e}
\usepackage{mathtools}
\usepackage{verbatim}
\usepackage{amsthm}

\newcommand{\E}{\mathbb{E}}
\newcommand{\Z}{\mathbb{Z}}
\renewcommand{\P}{\mathbb{P}}

\newtheorem{thrm}{Theorem}

\newtheorem{lemma}{Lemma}
\newtheorem{cor}{Corollary}

\theoremstyle{remark}
\newtheorem{rem}{Remark}

\theoremstyle{definition}
\newtheorem{ex}{Example}

\title{Upper and Lower Bounds on the Speed of a One Dimensional Excited Random Walk }
\date{\today}
\author{Erin Bossen}
\address{Erin Bossen \\ Eastern Illinois University \\ Department of Mathematics and Computer Science \\ 600 Lincoln Avenue \\ Charleston, IL 61920 \\ USA }
\author{Brian Kidd}
\address{Brian Kidd \\  Purdue University \\ Department of Statistics \\ 250 N University St \\ West Lafayette, IN 47907 \\ USA}
\author{Owen Levin}
\address{Owen Levin \\ University of Minnesota \\ School of Mathematics \\ 206 Church Street S.E. \\ Minneapolis, MN 55455 \\ USA }
\author{Jonathon Peterson}
\address{Jonathon Peterson \\  Purdue University \\ Department of Mathematics \\ 150 N University St \\ West Lafayette, IN 47907 \\ USA }
\author{Jacob Smith}
\address{Jacob Smith \\ Franklin College \\ Department of Mathematics and Computing Sciences \\ 101 Branigin Blvd. \\ Franklin, IN  46131 \\ USA }
\author{Kevin M. Stangl}
\address{Keven Stangl \\ University of California, Los Angeles \\ Department of Mathematics \\ Los Angeles, CA 90024 \\ USA }

\subjclass[2010]{60K35,60G50}
\keywords{Excited random walk, Markov chain, stationary distribution}

\begin{document}

\begin{abstract}
Excited random walks (ERWs) are a self-interacting non-Markovian random walk in which the future behavior of the walk is influenced by the number of times the walk has previously visited its current site. We study the speed of the walk, defined as $V = \lim_{n \rightarrow \infty} \frac{X_n}{n}$ where $X_n$ is the state of the walk at time $n$. While results exist that indicate when the speed is non-zero, there exists no explicit formula for the speed. 
 It is difficult to solve for the speed directly due to complex dependencies in the walk since the next step of the walker depends on how many times the walker has reached the current site. 
 We derive the first non-trivial upper and lower bounds for the speed of the walk.
In certain cases these upper and lower bounds are remarkably close together. 
\end{abstract}

\maketitle

\section{Introduction}

\indent A simple random walk on $\Z$ can be thought of as a simple discrete model for random motion where at each time step the 'walker' tosses a (possibly biased) coin and steps right if he gets a heads and left if he gets a tails. 
Mathematically, if we denote the position of the walk after $n$ steps by $S_n$ then we can represent the walk as $S_n = \sum_{i=0}^n \xi_i$ where the sequence of random variables $\xi_1,\xi_2,\xi_3,\ldots$ represent the successive steps of the walk. Since the steps are given by the outcomes of repeated tosses of a coin, the random variables $\{\xi_i\}_{i\geq 0}$ are independent and identically distributed (i.i.d.) with $P(\xi_1 = p)$ and $P(\xi_1 = -1) = 1-p$ (here $p \in (0,1)$ is the probability that the coin the walker is tossing comes up heads).

Simple random walks are very well known and much is known about them, but in this paper we will focus on a different model for random motion called an excited random walk.  
In an excited random walk, rather than the steps of the walk being i.i.d.\ the probability of the walker moving right ($+1$) or left ($-1$) from a site on the $n$-th step is a function of how many times the walker has stepped on that site by time $n$. 
To describe the excited random walk model, we begin by fixing an integer $M\geq 1$ and parameters $p_1, p_2, \dots, p_M \in (0,1)$. 
When the walker visits a location $i$ for the $j$th time, if $j\leq M$ then the walker tosses a coin with probability of heads $p_j$ while if $j>M$ the walker tosses a fair coin ($p=1/2$) to determine the next step left or right. 
That is, an excited random walk is a stochastic process $\{X_n\}_{n\geq 0}$ starting at $X_0 = 0$ and such that $X_{n+1} = X_n \pm 1$ and 
\[
 \P( X_{n+1} = X_n+1 \, | \, X_0 = x_0, X_1 = x_1, \ldots, X_n=x_n ) 
=
\begin{cases}
 p_j & \text{if } \#\{ k\leq n: x_k = x_n \} = j \leq M \\
 \frac{1}{2} & \text{if } \#\{ k\leq n: x_k = x_n \} > M. 
\end{cases}
\]

Excited random walks are sometimes also called ``cookie random walks'' due to the following interpretation of the dynamics. 
We imagine that initially there is an identical stack of $M$ cookies at each site. 
At every step the random walker takes the top cookie from the stack at the current site (if there is at least one cookie left) and eats it. 
The cookie induces an ``excitement'' or drift which causes the walker to step to the right with probability $p_j$ (or left with probability $1-p_j$). If the walker ever returns to a site where all the cookies have already been eaten then there is nothing to ``excite'' him and so he steps left/right with equal probability. 
Due to this ``cookie'' interpretation of excited random walks we will often refer to the parameter $M$ as the number of cookies at each site and the parameter $p_j$ as the ``strength'' of the $j$-th cookie.

\begin{figure}
\begin{center}\renewcommand{\arraystretch}{1.5}
\begin{tabular}{c|c}
 \includegraphics[width=0.4\textwidth, page=1]{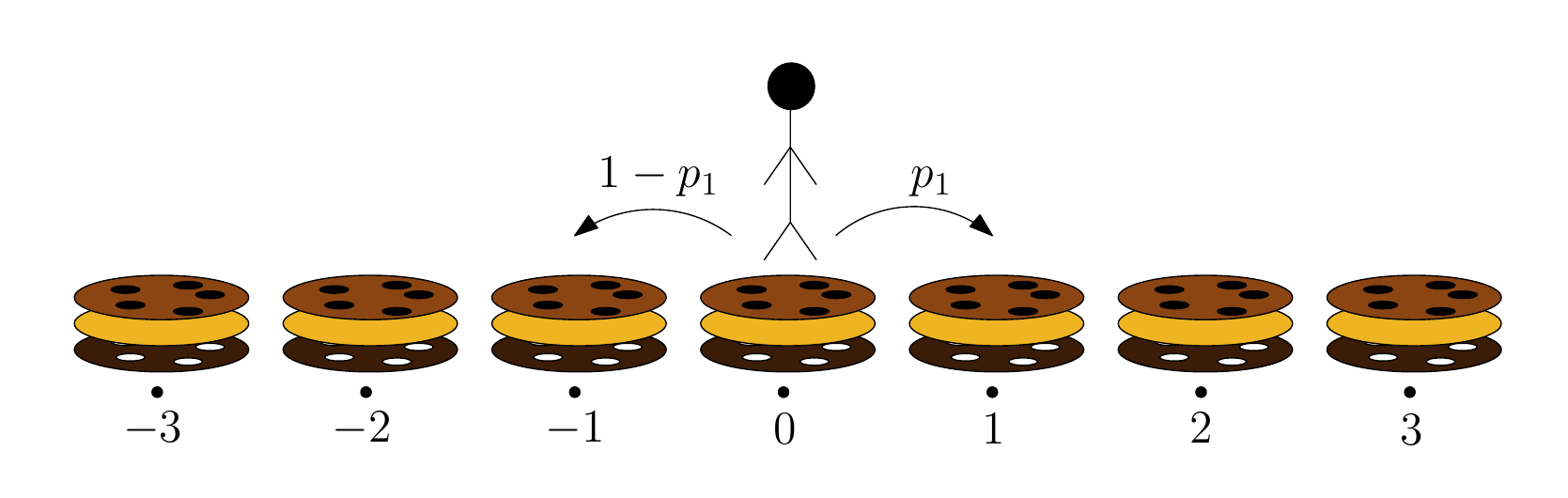}
&\includegraphics[width=0.4\textwidth, page=2]{CookieRW.pdf}\\ \hline
 \includegraphics[width=0.4\textwidth, page=3]{CookieRW.pdf} 
&\includegraphics[width=0.4\textwidth, page=4]{CookieRW.pdf} 
\end{tabular}
\end{center}
\caption{A partial example of an excited random walk with $M=3,\vec{p}=(p_1,p_2,p_3)$ and with transition probabilities shown -- Top left: the initial state of the walker --Top right: A possible state after 9 steps -- Bottom left: 10 steps into the same walk, with the most recent step to the right -- Bottom right: 11 steps into the walk, the walker is now in a state with no more cookies left and has equal transition probabilities to the left and right.}
\end{figure}

\subsection{Background and previous results}

Excited random walks were first introduced by Benjamini and Wilson in \cite{benjamini2003excited}. In the model considered by Benjamini and Wilson, however, there was only one cookie at each site $M=1$. This model was then generalized by Zerner in \cite{zerner2005multi} to allow for multiple cookies at each site, but with the restriction that all $p_j \geq \frac{1}{2}$; that is all cookies induced a non-negative drift for the walker. Kosygina and Zerner further generalized the model in \cite{kosyginazerner2008posneg} to allow for the possibility of both ``positive'' ($p_j > 1/2$) and ``negative'' ($p<1/2$) cookies in the stack of cookies at each site. 
In fact, the model of excited random walks is even more general than what we have described here. Certain results have even allowed for placing random cookie stacks at sites (rather than the same cookie stack at each site) and for infinitely many cookies at each site. 
In this paper, however, we will restrict ourselves to the simpler model described above of $M$ cookies at each site with strengths $p_1,p_2,\ldots,p_M$. 

The behavior of simple random walks is quite easy to analyze since as noted above the walk $S_n = \sum_{i=1}^n \xi_i$ is the sum of i.i.d.\ random variables. In particular, the law of large numbers implies that $\lim_{n\to\infty} \frac{S_n}{n} = E[\xi_1] = 2p-1$ with probability 1. That is, the random walk has a deterministic limiting speed of $2p-1$. Thus, if $p>1/2$ then the walk moves to the right and with positive speed while if $p<1/2$ the walk moves to the left with speed $1-2p$ (or equivalently, for any $p \in [0,1]$ the walker simply moves with \emph{velocity} $2p-1$). In either of these cases we say that the walk is \emph{transient} since it only visits any site a finite number of times. 
More generally, if a random walk is transient with non-zero speed, it is \emph{ballistic}.  For one-dimensional simple random walks, transience and ballisticity are equivalent, but as we will see in our discussion of excited random walks, this is not always the case.
The case $p=1/2$ is more delicate, but it was shown in 1921 by P\'olya \cite{polya1921rw} that a one-dimensional simple symmetric random walk is \emph{recurrent}; that is, the walk visits every site infinitely many times.

In contrast to simple random walks, the behavior of excited random walks is much more difficult to determine since the self-interacting nature of the walk creates dependencies among steps of the walk that are very hard to handle. Moreover, the behavior of the walk is at times like a biased random walk (on the first $M$ visits to sites) while at other times is like a symmetric random walk (after more than $M$ visits to a site). Thus, even the question of determining whether the excited random walk is recurrent or transient is quite difficult. 
In spite of these difficulties, a number of characteristics of excited random walks have been determined to depend on a single easy to calculate parameter. 
\begin{equation}\label{deltadef}
 \delta = \sum_{j=1}^M (2 p_j - 1). 
\end{equation}
We will use the notation $\delta_j = 2p_j-1$ for the drift of the $j$-th cookie in the cookie stack. Thus, the parameter $\delta = \sum_{j=1}^M \delta_j$ can be thought of as the net total drift contained in all the cookies in the cookie stack at each site.  

\begin{thrm}[Zerner \cite{zerner2005multi}, Kosygina \& Zerner \cite{kosyginazerner2008posneg}]\label{ERWtrans}
 The parameter $\delta$ determines the recurrence or transience of the excited random walk. 
\begin{enumerate}
 \item If $\delta>1$ then the walk is transient to the right; that is, $\P(\lim_{n\to\infty} X_n = +\infty) = 1$. 
 \item If $\delta<-1$ then the walk is transient to the left; that is, $\P(\lim_{n\to\infty} X_n = -\infty) = 1$.
 \item If $\delta \in [-1,1]$ then the walk is recurrent; that is, $\P(\liminf_{n\to\infty} X_n = -\infty, \, \limsup_{n\to\infty}X_n = +\infty) = 1$. 
\end{enumerate}
\end{thrm}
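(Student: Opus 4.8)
The plan is to reduce the whole statement to the behavior of a single auxiliary branching process. First I would exploit symmetry: reflecting the walk through the origin swaps left and right and replaces each $p_j$ by $1-p_j$, hence sends $\delta$ to $-\delta$. Thus the case $\delta<-1$ (transience left) is the mirror image of $\delta>1$ (transience right), and in the regime $\delta\in[-1,1]$ the same reflection is what will let me upgrade a one-sided return statement to the full two-sided conclusion $\liminf_n X_n=-\infty$ and $\limsup_n X_n=+\infty$. So it suffices to prove transience to the right exactly when $\delta>1$, and recurrence exactly when $\delta\in[-1,1]$. For the all-positive-cookie case one could instead use monotonicity and coupling, but I would use the branching-process method since it treats signed cookies uniformly.

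The central device is to encode the walk, run up to the hitting time $T_n=\inf\{k:X_k=n\}$, by its edge-crossing counts read from right to left. Let $D_k$ be the number of downcrossings of the edge $\{n-k-1,n-k\}$ (steps from $n-k$ to $n-k-1$) before time $T_n$, so that $D_0=0$ since the walker has only just arrived at $n$. Because the walk goes from $0$ to $n$, every edge strictly between has net crossing $+1$, so the number of upcrossings of $\{n-k-1,n-k\}$ equals $D_k+1$. The key structural claim is that $D_0,D_1,D_2,\ldots$ is a Markov chain: given $D_k$, the walker must make exactly $D_k+1$ rightward steps out of site $n-k-1$, and the number $D_{k+1}$ of leftward steps it makes there is a sum, over these excursions, of the leftward decisions generated by the cookie stack at that site. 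Since each site's decision sequence starts with at most $M$ cookie-biased tosses and is fair thereafter, $D_{k+1}$ is, conditionally on $D_k$, a sum of independent copies of a fixed offspring law with a fixed ``$+1$'' migration term. This realizes $\{D_k\}$ as a branching process with migration whose law does not depend on $n$.

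Next I would compute the drift and tie the threshold to $\delta$. Once a site's cookies are consumed the decisions are fair, so conditionally on $D_k=z$ the count $D_{k+1}$ is essentially negative-binomial with mean $z+1$; the finitely many cookies perturb this, and a direct computation gives $E[D_{k+1}\mid D_k=z]=z+1-\delta$ once $z$ is large enough that all cookies are used. Hence $\{D_k\}$ has asymptotic drift $1-\delta$, and I would show it is positive recurrent precisely when $\delta>1$. Positive recurrence means the downcrossing counts stay stochastically bounded as we descend from level $n$, which (letting $n\to\infty$) forces the walk to reach every level and advance steadily, giving $T_n<\infty$ for all $n$ and $X_n\to+\infty$: transience to the right. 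When $\delta<1$ the drift is positive and $\{D_k\}$ escapes to $+\infty$, so the downcrossing counts blow up and the walk cannot be transient to the right; applying the same analysis to the reflected walk shows it is not transient to the left unless $\delta<-1$. Thus for $\delta\in[-1,1]$ the walk is recurrent, and a zero-one law together with the reflection symmetry upgrades this to $\liminf_n X_n=-\infty$ and $\limsup_n X_n=+\infty$.

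The hard part will be the borderline cases $\delta=\pm 1$, where the branching process with migration is exactly critical (asymptotic drift $0$). Deciding recurrence versus transience there is a genuinely delicate Lamperti-type problem: one must distinguish a null-recurrent process from a barely transient one, which needs sharp second-moment and diffusive estimates rather than a crude drift comparison, and the conclusion that $\delta=1$ still produces recurrence (so the interval is closed) is precisely this subtle point. A secondary difficulty is rigorously establishing the Markov property and stationarity of $\{D_k\}$ despite the original walk's non-Markovian, self-interacting dynamics, together with the zero-one law, which must be proved separately. Once the embedding is in place and the criticality is resolved, translating the branching-process behavior back into transience or recurrence of $X_n$ is comparatively routine.
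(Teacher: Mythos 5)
First, a point of order: the paper does not prove this theorem. It is imported wholesale from Zerner \cite{zerner2005multi} and Kosygina--Zerner \cite{kosyginazerner2008posneg}, so there is no in-paper argument to compare yours against. The most I can say is that your strategy is the one the cited literature actually uses, and that your crossing process $\{D_k\}$ is precisely the backward branching process the paper introduces later for the speed formula, with offspring law $p(i,j)=\P(F_{i+1}=j)$ and the drift identity $\E_k[Z_1]=k+1-\delta$ for $k\geq M-1$ that the paper proves in its appendix. So the roadmap is well chosen.

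Judged as a proof, however, it has two genuine gaps, both of which you only half-acknowledge. (1) The bridge between the chain and the walk is asserted rather than established: ``positive recurrence of $\{D_k\}$ forces $X_n\to+\infty$'' does not follow from what you wrote, since $T_n<\infty$ for all $n$ also holds throughout the recurrent regime $\delta\in[-1,1]$, where the walk still visits every site. What is actually needed is that the total number of downcrossings of a fixed edge over \emph{all} time --- the monotone limit as $n\to\infty$ of the downcrossings before $T_n$ --- is a.s.\ finite exactly when $\delta>1$, combined with a zero-one law for the event $\{\lim_n X_n=+\infty\}$; for a non-Markovian self-interacting walk that zero-one law is itself a substantive step, not a remark. (2) The critical cases $\delta=\pm1$ are the entire content of the theorem's closed interval $[-1,1]$: there the conditional drift of $\{D_k\}$ vanishes identically on states $\geq M-1$, so no first-moment (Foster/Lamperti drift) comparison can decide the matter, and you give no indication of why the second-moment analysis lands on the recurrent side at $\delta=1$ rather than the transient one. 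Until those two points are supplied, what you have is a correct outline of the known proof, not a proof.
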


In \cite{zerner2005multi} Zerner also proved that excited random walks have a limiting speed. That is, given any parameters $M$ and $\vec{p} = (p_1,p_2,\ldots,p_M)$ for an excited random walk there is a constant $V_{M,\vec{p}} \in [-1,1]$ such that
\begin{equation}\label{LLN}
 \lim_{n\to\infty} \frac{X_n}{n}  = V_{M,\vec{p}}, \quad \text{with probability one.}
\end{equation}
Determining the exact value of the speed $V_{M,\vec{p}}$ as a function of $M$ and $\vec{p}$, however, remains an open problem and is the focus of this present paper. 
While there is still no explicit formula for $V_{M,\vec{p}}$ in general, it is known that the parameter $\delta$ determines exactly when the speed is positive, negative or zero. 

\begin{thrm}[Basdevant \& Singh \cite{basdevant2008}, Kosygina \& Zerner \cite{kosyginazerner2008posneg}]\label{speedpos}
 The parameter $\delta$ determines the sign of the limiting speed $V_{M,\vec{p}}$ of the excited random walk. 
\begin{enumerate}
 \item If $\delta > 2$ then $V_{M,\vec{p}} > 0$. 
 \item If $\delta < -2$ then $V_{M,\vec{p}} < 0$. 
 \item If $\delta \in [-2,2]$ then $V_{M,\vec{p}} = 0$. 
\end{enumerate}
\end{thrm}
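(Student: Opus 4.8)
The plan is to handle the three regimes separately, reducing two of them to the first by symmetry and to the recurrence theorem already quoted. For the symmetric reduction, observe that reflecting the walk through the origin, i.e.\ replacing each cookie strength $p_j$ by $1-p_j$, turns an excited random walk with parameter $\delta$ into one with parameter $-\delta$ and negates the speed; hence $\delta<-2\Rightarrow V_{M,\vec p}<0$ follows immediately from $\delta>2\Rightarrow V_{M,\vec p}>0$, and likewise the case $\delta\in[-2,-1)$ of the zero-speed claim follows from $\delta\in(1,2]$. For $\delta\in[-1,1]$ the walk is recurrent by Theorem~\ref{ERWtrans}, so $\limsup_n X_n=+\infty$ and $\liminf_n X_n=-\infty$ almost surely; since $X_n/n\to V_{M,\vec p}$ by \eqref{LLN}, a nonzero speed would force $X_n\to\pm\infty$, contradicting recurrence, so $V_{M,\vec p}=0$. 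Everything then reduces to the transient-to-the-right regime $\delta>1$ and to deciding, within it, when the speed is strictly positive.

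In that regime I would pass from the speed to the hitting times $T_n=\inf\{k:X_k=n\}$. Because the walk is transient to the right, $X_n\to+\infty$, and \eqref{LLN} is equivalent to $T_n/n\to 1/V_{M,\vec p}$, with the convention that $V_{M,\vec p}=0$ precisely when $T_n/n\to\infty$. Writing $T_n$ as the number of right steps plus the number of left steps taken before time $T_n$, and using that the net displacement is $n$, yields the exact identity $T_n=n+2B_n$, where $B_n$ is the total number of leftward steps before reaching $n$. The backtracking decomposes over edges as $B_n=\sum_k D_k$, with $D_k$ the number of downcrossings of the edge $\{k,k+1\}$ before time $T_n$.

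The key step is to recognize the edge-local-time sequence, read from $n$ backwards, as a branching process with migration: setting $Z_j:=D_{n-j}$, the process $(Z_j)_{j\ge0}$ is a Markov chain in which each existing downcrossing independently spawns further downcrossings according to an offspring law governed by the symmetric ($>M$ visits) dynamics, together with a migration term coming from the first $M$ cookie-driven steps at each site. The cookie drifts enter the chain only through their sum, so its long-run behavior is controlled by $\delta$: for $\delta>1$ the migration drift is negative, the chain is positive recurrent with a unique stationary distribution $\pi$, and the identity $T_n=n+2\sum_{j=0}^{n-1}Z_j$ combined with an ergodic theorem for $(Z_j)$ gives
\begin{equation*}
 \frac{T_n}{n}\;\longrightarrow\;1+2\,\E_\pi[Z],
\end{equation*}
a limit that is finite if and only if $\E_\pi[Z]<\infty$. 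Consequently $V_{M,\vec p}=\bigl(1+2\,\E_\pi[Z]\bigr)^{-1}$, and the whole theorem comes down to deciding when $\E_\pi[Z]$ is finite.

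The heart of the argument, and the step I expect to be the main obstacle, is the tail analysis of the stationary distribution $\pi$: one must show that $\pi$ is regularly varying and that its moment threshold sits exactly at $\delta=2$, so that $\E_\pi[Z]<\infty$ if and only if $\delta>2$, with the boundary case $\delta=2$ giving $\E_\pi[Z]=\infty$ (and $T_n/n\to\infty$ with a logarithmic correction). This is precisely where the constant $2$ in the statement is produced, and it is the only place where a genuinely quantitative, rather than qualitative, analysis of the branching process with migration is needed; all the recurrence/transience dichotomies used above require only the coarser threshold $\delta=1$. Pinning down the sharp tail exponent of $\pi$ calls for a careful comparison of the branching (variance) and migration (drift) contributions near criticality, for instance via a Lyapunov/Foster-type computation or a renewal analysis of the excursions of $(Z_j)$ away from $0$. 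Granting this tail estimate, we conclude $V_{M,\vec p}>0$ for $\delta>2$ and $V_{M,\vec p}=0$ for $\delta\in(1,2]$, which together with the reductions above completes all three cases.
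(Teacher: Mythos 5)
First, note that the paper does not actually prove Theorem~\ref{speedpos}: it is quoted from Basdevant--Singh and Kosygina--Zerner, and the closest thing in the text is the proof sketch of Theorem~\ref{speed}, i.e.\ of the identity $V_{M,\vec p}=(1+2\E_\pi[Z_0])^{-1}$ via $T_n=n+2\sum_{k\leq n}Z_k$. Your reductions are sound as far as they go: the reflection $p_j\mapsto 1-p_j$ does negate the speed and send $\delta$ to $-\delta$; the recurrence case $\delta\in[-1,1]$ does force $V_{M,\vec p}=0$ from \eqref{LLN}; and your hitting-time decomposition is exactly the one the paper sketches for Theorem~\ref{speed}. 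One inaccuracy along the way: the cookie strengths do \emph{not} enter the chain $Z$ only through their sum --- e.g.\ $p(0,0)=p_1$, and the whole upper-left block of the transition matrix depends on the individual $p_j$ --- it is only the conditional drift $\E_k[Z_1]=k+1-\delta$ for $k\geq M-1$ (equation \eqref{EkZ}) that depends solely on $\delta$.

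The genuine gap is the step you yourself flag and then assume: that $\E_\pi[Z]<\infty$ if and only if $\delta>2$. That claim \emph{is} the theorem --- every other step in your write-up is soft --- and ``$\pi$ is regularly varying with moment threshold at $\delta=2$'' is asserted, not derived. Nothing in your outline produces the exponent: the positive-recurrence argument only sees the threshold $\delta=1$, as you concede, and a generic Lyapunov/Foster computation locates recurrence, not the first-moment cutoff of the stationary law. To close this you need a genuinely quantitative input, for instance the route taken in \cite{basdevant2008}: extract the asymptotics of $1-G(s)$ as $s\uparrow 1$ from the functional equation \eqref{Grec}, which yields Theorem~\ref{speedB}, $\E_\pi[Z_0]=B''(1)/(2(\delta-2))$, whose blow-up as $\delta\downarrow 2$ is exactly the divergence your argument needs (the alternative in \cite{kosyginazerner2008posneg} is a tail estimate of the form $\P_\pi(Z>k)\asymp k^{1-\delta}$ for the branching process with migration). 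Without one of these, the proposal establishes only the qualitative dichotomy at $\delta=\pm1$, not the stated one at $\delta=\pm2$.
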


\begin{rem}
 Note that Theorems \ref{ERWtrans} and \ref{speedpos} together highlight a very peculiar feature of excited random walks: If $\delta \in (1,2]$ then the walk is transient to the right, but with zero asymptotic speed. 
At first this might seem contradictory, but in fact it holds because in this case $X_n$ grows to infinity roughly like $n^{\delta/2}$ if $\delta \in (1,2)$ or like $n/\log n$ if $\delta =2$ \cite{basdevant2008growth,kosyginazerner2008posneg}. 
\end{rem}

\begin{ex} 
Let $M=3$ and $\vec{p} = (p,p,p)$. Then $\delta = 6p-3$. 
\begin{enumerate}
 \item If $p \in [\frac{1}{3},\frac{2}{3}]$ then $\delta\in[-1,1]$, so the walk is recurrent.
 \item If $p \in [\frac{1}{6},\frac{5}{6}]$ then $\delta\in[-2,2]$, so the walk is transient with $V_{M,\vec{p}}=0$.
 \item If $p \in [0,\frac{1}{6})$ then $\delta < -2$, so the walk is ballistic with $V_{M,\vec{p}} < 0$.
 \item If $p \in (\frac{5}{6},1]$ then $\delta > 2$, so the walk is ballistic with $V_{M,\vec{p}} > 0$.
\end{enumerate}
\end{ex}

 \begin{rem}
 It should be noted that if $p_i\in(0,1)$ for all $i$, then unless $M \geq 3$, $V_{M,\vec{p}}=0$. If $M < 3$ then $\delta < 4\cdot1-2=2.$ Thus, $V_{M,\vec{p}}$ is non-positive. A symmetric argument shows that $\delta >-2$ and thus $V_{M,\vec{p}}=0$ unless $M\geq 3$
\end{rem}

Theorem \ref{speedpos} shows that we can identify the speed of the excited random walk exactly when the speed is zero (when $\delta \in [-2,2]$). 
However, as noted above when the speed is non-zero (when $\delta \notin [-2,2]$) then there is no explicit formula for the speed $V_{M,\vec{p}}$. 
The focus of this paper is to compute explicit upper and lower bounds for the speed in these cases. 
For simplicity we will restrict ourselves to the case of positive speed ($\delta > 2$) since the negative speed case can by handled similarly by symmetric arguments. 
Prior to this paper, when $\delta>2$ the only known upper and lower bounds on the speed were the trivial ones
\[
 0 < V_{M,\vec{p}} \leq \max_{j\leq M} (2 p_j - 1). 
\]
The upper bound on the right is the speed of a simple random walk which moves to the right with probability $p^* = \max_{j\leq M} p_j$ on each step. Since this simple random walk is always at least as likely to step right as the excited random walk, it is easy to see that the excited random walk has a speed that is less than or equal to that of this simple random walk.
We will develop a method below for obtaining much better bounds than these trivial bounds. In particular, in the case of $M = 3$ cookies per site we will obtain upper and lower bounds which differ by at most 0.0194565. 

The rest of the paper will be organized as follows. We begin with a brief introduction to the theory of Markov chains to cover results we will use.  Then we describe a particular Markov chain related to excited random walks, known as the backward branching process.  We discuss known results about this Markov chain and how they relate to the speed of an excited random walk. Afterward, we derive  bounds  on the speed using properties of the backward branching process.  We end with a discussion of how well these bounds approximate the speed.

\section{A related Markov chain}
In this section we will introduce a Markov chain that is useful for studying the speed of excited random walks. 
First, however, we will give a short overview of the notation and terminology of Markov chains and recall a few useful facts about Markov chains.

\subsection{Markov Chains}

Recall that a Markov chain on a countable state space $I$ is a stochastic process $\{Z_n\}_{n\geq 0}$ such that 
for any choice of $n\geq 1$ and  $i_0,i_1,\ldots,i_n,i_{n+1} \in I$ we have 
\begin{align*}
  \P(Z_{n+1} = i_{n+1} \, | \, Z_0 = i_0, \, Z_1 = i_1, \ldots, Z_{n-1} = i_{n-1}, \,Z_n= i_n) 
&= \P(Z_{n+1} = i_{n+1} \, | \, Z_n = i_n ) \\
&= \P(Z_1 = i_{n+1} \, | \, Z_0= i_n ).
\end{align*}
The transition matrix for the Markov chain is the matrix
\[
 P = (p(i,j))_{i,j \in I}, \quad \text{where } p(i,j) = \P(X_1=j \, | \, X_0=i). 
\]
For ease of notation, if the Markov chain starts at $Z_0 = i$ we will write $\P_i(\cdot)$ in place of $\P(\cdot \, | \, Z_0=i)$. 
If the Markov chain starts from a random initial condition given by $\mu = (\mu(i))_{i\in I}$ where $\mu(i)$ is the probability that the Markov chain starts at $Z_0=i$, then we will denote this with the notation $\P_\mu$; that is, $\P_\mu(\cdot)= \sum_i \mu(i) \P_i(\cdot)$. Expectations with respect to the probability distributions $\P_i$ or $\P_\mu$ for the Markov chain are denoted by $\E_i$ or $\E_\mu$, respectively.

A special choice of an initial distribution is what is called a \emph{stationary distribution}. A probability distribution $\pi = (\pi(i))_{i \in I}$ is a stationary distribution for the Markov chain $Z=\{Z_n\}_{n\geq 0}$ if $\P_\pi(Z_1=j) = \P_\pi(Z_0=j) = \pi(j)$ for all $j \in I$; that is, if $Z_1$ has the same distribution $\pi$ as $Z_0$ (and thus, by induction, $Z_n$ has the same distribution as $Z_0$ for all $n\geq 1$). 
If $\pi$ is a stationary distribution then 
\[
 \pi(j) = \P_\pi(Z_1=j) = \sum_{i \in I} \pi(i) \P_i(X_1=j) = \sum_{i \in I} \pi(i) p(i,j),
\]
so that viewing $\pi=(\pi(i))_{i\in I}$ as a row vector we have $\pi = \pi P$; that is, $\pi$ is a left eigenvector of the transition matrix $P$ with eigenvalue $1$. 
If the state space $I$ of the Markov chain is finite, then computing the stationary distributions is a simple problem in linear algebra. However, if the state space $I$ is countably infinite then computing stationary distributions is much more difficult and in fact, for some inifinite state Markov chains there are no stationary distributions. It is known, however, that if the Markov chain is irreducible (that is, if it is possible starting at any state $i$ to eventually reach any other state $j$) and there is a stationary distribution then it is unique.  

Stationary distributions are important for the analysis of Markov chains because they can be used to determine the long run asymptotics of the Markov chain. 
For instance, if the Markov chain is irreducible and a stationary distribution $\pi$ exists, then it is known that for any initial starting condition that 
\[
 \lim_{n\to\infty} \frac{1}{n} \sum_{k=1}^n Z_k = \E_\pi[Z_0] = \sum_{j\in j} \pi(j) j, \quad \text{with probability one.}
\]

\subsection{The Backward Branching Process}

Because the transition probabilities of the excited random walk depend on the number of prior visits to the present location and not only on the current location of the walk, an excited random walk is not a Markov chain. 
However, there is a Markov chain that we can study that can give information about the excited random walk. This Markov chain is often referred to in the literature as the ``backward branching process'' due to some structural similarity with models for population growth known as branching processes. 
The backward branching process is related to the excited random walk through an analysis of the number of left (or backward) crossings of edges of the excited random walk before the walk reaches some point to the right for the first time. We refer the reader interested in the details of this connection to \cite{basdevant2008}. Here just provides a description of the transition probabilities for this Markov chain and the relevance to the limiting speed of the excited random walk.

To describe the transition probabilities for the backwards branching process, we imagine an infinite sequence of independent coin flips where for the first $M$ flips we use coins which come up heads with probability $p_j$ for $j=1,2,\ldots,M$ and then for all subsequent flips we use a fair coin. Mathematically we can represent this as the sequence $\{\xi_j\}_{j\geq 1}$ of independent Bernoulli random variables where 
\[
 \P(\xi_j = 1) = 
\begin{cases}
 p_j &\text{if } j\leq M \\
 \frac{1}{2} &\text{if } j > M. 
\end{cases}
\]
Next, for any $m\geq 1$ we let 
\[
 F_m = \inf\left\{ k\geq 0: \sum_{j=1}^{m+k} \xi_j \right\}. 
\]
Again viewing the $\{\xi_j\}_{j\geq 1}$ as the outcome of successive coin tosses we have that $F_m$ can be interpreted as the number of ``tails'' before the $m$-th ``heads.'' 
Finally, using this notation we are able to define the backward branching process associated to the excited random walk with parameters $M$ and $\vec{p}=(p_1,p_2,\ldots,p_M)$ as the Markov chain $Z=\{Z_n\}_{n\geq 0}$ on $\Z_+ = \{0,1,2,\ldots\}$ with transition probabilities given by
\[
 p(i,j) = \P( F_{i+1} = j ), \quad \text{for } i,j\geq 0. 
\]

\begin{ex}\label{ex:tp}
Some transition probabilities which we will use later in Lemma~\ref{pi0bounds} are given below.  Also we show the full transition matrix for when $p_1=p_2=p_3=p$. When $M=3$ cookies per site we have 
 \begin{itemize}
\item $p(0,0) = p_1$ \hfill(no tails before a single heads)
\item $p(0,1) =(1- p_1)p_2$ \hfill(one tail before a single heads)
\item $p(0,2) = (1-p_1)(1-p_2)p_3$ \hfill(two tails before a single heads)
\item $p(0,k) = (1-p_1)(1-p_2)(1-p_3)/2^{k-2}$ \quad for $k\geq 3$ \hfill($k$ tails before a single heads)
\item $p(1,0) =p_1p_2$ \hfill(no tails before two heads)
\item $p(1,1) =(1-p_1)p_2p_3+p_1(1-p_2)p_3$ \hfill(one tail before two heads)
\item $p(k,0) = p_1p_2p_3/2^{k-2}$ for $k>3$ \hfill(no tails before $k+1$ heads)
 \end{itemize}
 
 In the $M=3$ case where $p_1=p_2=p_3=p,$ (letting $q:=1-p$), 
the initial entries of the transition matrix (with $i,j \leq 2$) are
\[
\left(\begin{array}{cccc}
    p       & pq & pq^2 &  \\
    p^2      & 2p^2q & \frac{3}{2}pq^2 & \cdots \\
    p^3      & \frac{3}{2}p^2q & \frac{3}{4}(pq^2+ p^2q) & \\
& \vdots && \ddots
\end{array}\right)
\]
and the remaining entries (when either $i$ or $j>2$) are given by 
\[
p(i,j) = \frac{1}{2^{i+j-2}}\left[\binom{i+j-3}{i-3}p^3 + \binom{i+j-3}{j-3}q^3 + 3\binom{i+j-3}{i-2}p^2q + 3\binom{i+j-3}{j-2}pq^2\right]
\]
\end{ex}

The Markov chain $Z$ was first introduced in the study of excited random walks by Basdevant and Singh in \cite{basdevant2008}. 
It is easy to see that the Markov chain $Z$ is irreducible since $p(i,j)>0$ for all $i,j \geq 0$. Moreover, Basdevant and Singh showed that the Markov chain $Z$ has a (unique) stationary distribution $\pi$ whenever the parameter $\delta > 1$ (or equivalently, by Theorem \ref{ERWtrans}, when the excited random walk is transient to the right).  
Most importantly, Basdevant and Singh proved that the limiting speed $V_{M,\vec{p}}$ for the excited random walk can be expressed in terms of the stationary distribution for the Markov process $Z$ in the following theorem.

\begin{thrm}[Basdevant \& Singh \cite{basdevant2008}]\label{speed} 
Suppose the parameters $M$ and $\vec{p} = (p_1,p_2,\ldots,p_M)$ are such that the speed $V_{M,\vec{p}} > 0$ (that is, $\delta>2$). 
If $\pi$ is the stationary distribution for the corresponding backward branching process $Z=\{Z_n\}_{n\geq 0}$, then 
\begin{equation}\label{VEZ}
	V_{M,\vec{p}}=\frac{1}{1+2\E_\pi[Z_0]}.
\end{equation}
\end{thrm}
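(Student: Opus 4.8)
The plan is to express the speed through hitting times and then to evaluate the limiting hitting time using the backward branching process. Since $\delta>2$, Theorems~\ref{ERWtrans} and~\ref{speedpos} guarantee that the walk is transient to the right with positive speed, so the hitting times $T_n=\inf\{m\geq 0:X_m=n\}$ are almost surely finite and tend to infinity. Applying the law of large numbers \eqref{LLN} along the subsequence $m=T_n$, where $X_{T_n}=n$, gives $\lim_{n\to\infty} n/T_n = V_{M,\vec{p}}$, so it suffices to prove that $T_n/n \to 1+2\E_\pi[Z_0]$ almost surely. To this end I would first relate $T_n$ to the number of leftward steps: if $R$ and $L$ denote the numbers of right and left steps taken before $T_n$, then $R-L=n$ and $R+L=T_n$, whence $T_n=n+2L_n$ with $L_n=L$ the total number of backward edge crossings. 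Writing $D_k$ for the number of steps from $k$ to $k-1$ before $T_n$ (the downcrossings of the edge $\{k-1,k\}$), we have $L_n=\sum_{k}D_k$, a sum with only finitely many nonzero terms because the walk is bounded below.

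The crux is to identify the reversed sequence of downcrossing counts with the backward branching process. To reach $n$ the walk crosses each edge $\{k,k+1\}$ with $k\leq n-1$ a net of one time to the right, so the number of upcrossings of $\{k,k+1\}$, which are exactly the rightward departures from site $k$, equals $D_{k+1}+1$. Reading the coin tosses used at site $k$ in time order — biased by $p_1,\dots,p_M$ and then fair, precisely the sequence defining $F_m$ — the walk departs $k$ to the right on a ``heads'' and to the left on a ``tails'', and its final departure from $k$ must be to the right (otherwise it would have to return through $k$). Hence, conditioned on $D_{k+1}=i$, the count $D_k$ of leftward departures is the number of tails before the $(i+1)$-st heads, which has law $\P(F_{i+1}=\cdot)=p(i,\cdot)$, independently across sites. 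Setting $Z_j:=D_{n-j}$ therefore gives $Z_0=D_n=0$ together with exactly the backward-branching transition probabilities, so $(Z_j)_{0\leq j\leq n}$ is distributed as the chain $Z$ started from $0$.

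Finally I would pass to the limit. The downcrossings at sites $k\leq 0$ contribute a finite quantity that does not grow with $n$, hence is $o(n)$, so $L_n/n$ has the same limit as $\tfrac1n\sum_{j=0}^{n-1}Z_j=\tfrac1n\sum_{k=1}^{n}D_k$. Since $\delta>2$, the stationary distribution $\pi$ of the irreducible positive recurrent chain $Z$ has finite mean $\E_\pi[Z_0]<\infty$, and the ergodic theorem for Markov chains yields $\tfrac1n\sum_{j=0}^{n-1}Z_j\to\E_\pi[Z_0]$ regardless of the starting state $Z_0=0$. Combining these facts gives $T_n/n\to 1+2\E_\pi[Z_0]$, and therefore $V_{M,\vec{p}}=\bigl(1+2\E_\pi[Z_0]\bigr)^{-1}$, which is \eqref{VEZ}.

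I expect the principal obstacle to be the rigorous justification of the branching identification in the second step: verifying both the Markov property in the leftward direction and the exact transition law $p(i,\cdot)=\P(F_{i+1}=\cdot)$ requires a careful accounting of the walk's excursions and the independent cookie environments at each site, including the fact that the last departure from each site is rightward. A secondary technical point is the interchange of the almost sure limit of $L_n/n$ with the ergodic average, which hinges on the finiteness $\E_\pi[Z_0]<\infty$ — exactly the condition $\delta>2$ separating positive speed from the zero-speed regime of Theorem~\ref{speedpos}, and consistent with the formula giving $V_{M,\vec{p}}=0$ when the mean diverges.
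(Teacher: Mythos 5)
Your proposal is correct and follows essentially the same route as the paper's own proof sketch: inverting the speed via the hitting times $T_n$, writing $T_n = n + 2\sum_k D_k$ plus an $o(n)$ contribution from the negative half-line, identifying the reversed downcrossing counts with the backward branching process, and applying the ergodic theorem to get $T_n/n \to 1+2\E_\pi[Z_0]$. In fact you supply more detail than the paper does on the key identification $p(i,\cdot)=\P(F_{i+1}=\cdot)$, which the paper defers entirely to Basdevant and Singh, and you correctly flag that step (together with the finiteness of $\E_\pi[Z_0]$) as the place where the real technical work lies.
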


A rationalization for and proof sketch of Theorem~\ref{speed} comes from the following. 
Because $\delta>2$ the walk $X$ is transient and almost surely $\lim_{n\to\infty}\frac{X_n}{n}=V_{M,\vec{p}}>0$.  
In such situations, it holds that almost surely \[\lim_{n\to\infty}\frac{X_n}{n}=\frac{1}{\lim_{n\to\infty}\frac{T_n}{n}}\]
where $T_n$ is the hitting time of site $n$. 
Essentially, this identity is just noting that distance over time can be expressed in terms of two different quantities for $X$ and each are equivalent to the velocity of the walk. 

Now, the hitting time limit can be expressed in terms of the backward branching process by $\lim_{n\to\infty}\frac{T_n}{n}=\lim_{n\to\infty}\frac{n+2\sum_{k=1}^n Z_k}{n}.$
To see this, we count the number of steps making up the hitting time to site $n$. The number of total steps down from positive site $k$ to site $k-1$ before the walk reaches $n$ is $\sum_{k=1}^n Z_k$.  Each of these down steps is cancelled by one step back up to to site $k$ before reaching $n$.  In addition, we have the final up step from each positive site $k$ up to $n$ which is $n$ steps.  Lastly, $T_n$ contains the total number of steps from $0$ to $-1$ and all the steps contained in the negative half line.  Because $X$ is transient to $+\infty$ when $\delta>2$, there are a finite (random) number $L$ of these steps and the limit of $\frac{L}{n}\to0$ almost surely as $n$ goes to $\infty$.
Then we have the following equalities which imply the conclusion of Theorem~\ref{speed}.
\[\frac{1}{V_{M,p}}
=\lim_{n\to\infty}\frac{T_n}{n}
=\lim_{n\to\infty}\frac{L+n+2\sum_{k=1}^n Z_k}{n}
=\lim_{n\to\infty}\frac{L}{n}+\frac{n}{n}+2\frac{1}{n}\sum_{k=1}^n Z_k
=1+2\E_\pi[Z_0]\]

While Theorem \ref{speed} expresses the speed $V_{M,\vec{p}}$ in terms of the stationary distribution of the backward branching process, unfortunately, this doesn't give an explicit formula for the speed since there is not yet an explicit formula for the stationary distribution $\pi$ (solving the infinite system of equations $\pi P = \pi$ is too difficult).
In the following section, however, we will develop some methods which can be used to obtain rigorous upper and lower bounds on $\E_\pi[Z_0]$ and consequently upper and lower bounds on $V_{M,\vec{p}}$.

\section{Reduction of the formula for the speed}
In this section we will show how some recursive formulas for the probability generating function of the distribution $\pi$ can be used to get useful approximations (upper and lower bounds) of $\E_\pi[Z_0]$. The starting point of our analysis of the speed of the excited random walk is a recursive formula for the probability generating function $G(s) := \sum_{k=0}^\infty \pi(k) s^k$ of the stationary distribution $\pi$ for the Markov chain $Z$. 
Basdevant and Singh \cite{basdevant2008} showed that $G(s)$ is the unique solution of the functional equation 
\begin{equation}\label{Grec}
1 - G\left(\frac{1}{2-s}\right) = A(s) [1-G(s)]+ B(s), \qquad s \in [0,1]
\end{equation}
 where
\begin{equation*}
 A(s) = \frac{1}{(2-s)^{M-1}\E_{M-1}[s^{Z_1}]},  \\
\end{equation*}
and
\begin{equation}\label{Beq}
 B(s) = 1 - \frac{1}{(2-s)^{M-1}\E_{M-1}[s^{Z_1}]} + \sum_{k=0}^{M-2}\pi(k)\left(\frac{\E_k[s^{Z_1}]}{(2-s)^{M-1}\E_{M-1}[s^{Z_1}]} - \frac{1}{(2-s)^k} \right)
\end{equation}
While the recursive equation \eqref{Grec} is still to hard to solve explicitly, using the fact that $\frac{1}{2-s} \approx s$ when $s \approx 1$ Basdevant and Singh were able to use \eqref{Grec} to obtain asymptotics of the function $G(s)$ near $s=1$. This is particularly useful because of the property of probability generating functions that 
\begin{equation}\label{Gprime}
 G'(1) = \sum_{k=1}^\infty \pi(k) k = \E_\pi[Z_0]. 
\end{equation}
By careful analysis of this recursive equation near $s=1$ and using the formula \eqref{VEZ} for the speed, Basdevant and Singh were able to deduce the following implicit formula for the speed of an ERW. 
 \begin{thrm}\label{speed2}[Basdevant \& Singh \cite{basdevant2008}]\label{speedB} When the speed is nonzero (i.e. when $\delta > 2)$, then $\E_\pi[Z_0] = G'(1) = \frac{B''(1)}{2(\delta-2)}$ and consequently the speed is is equal to 
\begin{equation}\label{speedform2}
V_{M,\vec{p}} = \frac{\delta - 2}{\delta - 2 + B''(1)},  
\end{equation}
where $B(s)$ is defined in \eqref{Beq}. 
 \end{thrm}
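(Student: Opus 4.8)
The plan is to read off $G'(1)=\E_\pi[Z_0]$ directly from the functional equation \eqref{Grec} by expanding both sides to second order at $s=1$. Set $\Phi(s)=1-G(s)$, so that $\Phi(1)=0$ and $\Phi'(1)=-G'(1)=-\E_\pi[Z_0]$, and write $h(s)=\tfrac{1}{2-s}$, so \eqref{Grec} reads $\Phi(h(s))=A(s)\Phi(s)+B(s)$. The entire argument turns on the elementary expansion $h(s)=1+(s-1)+(s-1)^2+\cdots$, which shows that $h(s)$ agrees with $s$ to first order but disagrees at second order; this mismatch is exactly what will produce the factor $(\delta-2)$.

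First I would record the boundary data. Since $\E_{M-1}[1^{Z_1}]=1$ we get $A(1)=1$, and substituting $s=1$ into \eqref{Beq} gives $B(1)=0$; both sides of the functional equation then vanish at $s=1$, consistent with $\Phi(1)=0$. The key computation is the identity $A'(1)=\delta-1$. Logarithmic differentiation of $A(s)=\left[(2-s)^{M-1}\E_{M-1}[s^{Z_1}]\right]^{-1}$ gives $A'(1)=(M-1)-\E_{M-1}[Z_1]$, and by the definition of the transition probabilities $\E_{M-1}[Z_1]=\E[F_M]$ is the expected number of tails before the $M$-th head in the coin sequence $\{\xi_j\}$. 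Conditioning on the number $K=\sum_{j=1}^M\xi_j$ of heads among the $M$ biased coins, the first $M$ tosses contribute $M-K$ tails before the $M$-th head, while completing the remaining $M-K$ heads with fair coins contributes a further expected $M-K$ tails; hence $\E_{M-1}[Z_1]=2(M-\E[K])=2M-2\sum_j p_j=M-\delta$, giving $A'(1)=(M-1)-(M-\delta)=\delta-1$.

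With these in hand I would match coefficients. Writing $t=s-1$ and $u=h(s)-1=t+t^2+O(t^3)$, the left side expands as $\Phi(h(s))=\Phi'(1)\,t+\bigl(\Phi'(1)+\tfrac12\Phi''(1)\bigr)t^2+O(t^3)$, while the right side expands as $A(s)\Phi(s)+B(s)=\bigl(\Phi'(1)+B'(1)\bigr)t+\bigl(\tfrac12\Phi''(1)+A'(1)\Phi'(1)+\tfrac12 B''(1)\bigr)t^2+O(t^3)$. The $t^1$ coefficients force $B'(1)=0$, and — the crucial point — the two $\tfrac12\Phi''(1)\,t^2$ terms cancel in the $t^2$ comparison, leaving
\[
\Phi'(1)\bigl(1-A'(1)\bigr)=\tfrac12 B''(1).
\]
Substituting $\Phi'(1)=-\E_\pi[Z_0]$ and $A'(1)=\delta-1$ yields $\E_\pi[Z_0]=\tfrac{B''(1)}{2(\delta-2)}$, and inserting this into \eqref{VEZ} gives \eqref{speedform2} after a one-line simplification.

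The hard part will be rigor at second order. The cancellation of $\Phi''(1)$ is what makes the formula plausible, but $G''(1)=\E_\pi[Z_0(Z_0-1)]$ is actually infinite when $2<\delta\le 3$, since the stationary tail decays only like $\pi(k)\sim c\,k^{-\delta}$, so a literal second-order Taylor expansion is not licensed in that regime. The honest version of the $t^2$ step is the asymptotic statement $1-G(s)\sim \E_\pi[Z_0](1-s)$ as $s\to 1^-$ with a remainder controlled well enough to identify the $(1-s)^2$-order coefficient, and this must be extracted from \eqref{Grec} itself — for instance by a bootstrap/contraction argument on the recursion or by Tauberian reasoning — rather than assumed. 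The input that makes even the first-order term meaningful is the finiteness $\E_\pi[Z_0]=G'(1)<\infty$, which is guaranteed here by $\delta>2$ via Theorems \ref{speed} and \ref{speedpos}; propagating enough regularity to pin down the next order uniformly is where the genuine work lies.
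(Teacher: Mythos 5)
Your proposal is correct and follows essentially the same route the paper indicates: the paper does not prove Theorem~\ref{speedB} itself but attributes it to Basdevant and Singh and describes precisely this strategy of expanding the functional equation \eqref{Grec} near $s=1$ using $\tfrac{1}{2-s}\approx s$. Your coefficient matching, the identity $A'(1)=\delta-1$ via $\E_{M-1}[Z_1]=M-\delta$ (which mirrors the appendix computation of \eqref{EkZ}), and your closing caveat that the second-order expansion requires Tauberian-type justification when $2<\delta\le 3$ are all accurate.
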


In deriving the representation \eqref{speedform2} for the speed, Basdevant and Singh were primarily interested in determining when the speed $V_{M,p}$ was positive. However, an additional consequence of this formula is that it comes much closer to giving an explicit formula for the speed. 
While computing $\E_\pi[Z_0]$ using the standard formula in \eqref{Gprime} requires knowing all of the stationary distribution, Theorem \ref{speedB} shows we can instead compute this using only the $M-1$ values $\pi(0),\pi(1),\ldots,\pi(M-2)$. 
This is because all of the probability generating functions $\E_k[ s^{Z_1}]$ can be computed explicitly so that the only unknown terms in $B(s)$ are $\pi(0),\pi(1),\ldots,\pi(M-2)$.

\begin{ex}\label{ex:pgf}
In the general case of $M=3$ cookies, the formula for $B(s)$ involves $\E_k[s^{Z_1}]$ for $k=0,1,2$. These can be explicitly computed using the formulas for the transition probabilities $p(k,j)$ for the backward branching process.
\begin{align*}
 \E_0[ s^{Z_1}] &=p(0,0)+sp(0,1)+s^2p(0,2)+\sum_{k=3}^\infty s^k p(0,k)\\
 	&=p_1 +s [(1 - p_1)p_2 ]+s^2[(1 - p_1)(1 - p_2)p_3]+ (1 - p_1) (1 - p_2) (1 - p_3)\sum _{k=3}^{\infty } \frac{s^k}{2^{k-2}}\\
 	&=p_1+s[(1-p_1) p_2 ]+s^2[(1-p_1) (1-p_2) p_3]-\frac{(1-p_1) (1-p_2) (1-p_3) s^3}{s-2}
\end{align*}
Similar explicit calculations show that 
\begin{align*}
 \E_1[ s^{Z_1}] 
 &=\frac{s (2 p_2 (s-1)-s) (2 p_3 (s-1)-s)-p_1 (s-1) \left(p_2 \left(2 p_3 (3 s-4) s-3 s^2+4\right)+2 s (s-2 p_3 (s-1))\right)}{(s-2)^2},
\end{align*}
and 
\begin{align*}
 \E_2[ s^{Z_1}] &= \frac{(2 p_1 (s-1)-s) (2 p_2 (s-1)-s) (2 p_3 (s-1)-s)}{(s-2)^3}
\end{align*}
\end{ex}

As noted above, Theorem \ref{speed2} shows that the speed $V_{M,\vec{p}}$ for an excited random walk can be expressed in terms of only the unknown values $\pi(0),\pi(1),\ldots,\pi(M-2)$. 
The following lemma, however, gives a linear relation among these parameters so that we can actually eliminate one of the unknowns.

\begin{lemma}
\label{genabc} The unique stationary distribution $\pi$ of $\{Z_n\}_{n\geq 0}$ satisfies the following equation.
$$\delta - 1 = \sum_{k=0}^{M-2} \pi(k) \left(\E_k[Z_1] - k - 1 + \delta \right)$$
\end{lemma}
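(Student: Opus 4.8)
The plan is to extract the desired identity from the single scalar relation that stationarity forces on the first moment, namely $\E_\pi[Z_0] = \E_\pi[\E_{Z_0}[Z_1]]$, together with an explicit evaluation of $\E_i[Z_1]$ once $i$ is large enough that all the coin flips defining the transition out of state $i$ are fair.

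First I would record the consequence of stationarity. Writing $\E_\pi[Z_0] = \sum_j j\,\pi(j)$ and substituting the fixed-point relation $\pi(j) = \sum_i \pi(i)\, p(i,j)$, an interchange of the (nonnegative) summations justified by Tonelli gives
\[
\E_\pi[Z_0] = \sum_i \pi(i) \sum_j j\, p(i,j) = \sum_i \pi(i)\, \E_i[Z_1].
\]
Since the speed is positive, Theorem~\ref{speed} guarantees $\E_\pi[Z_0] < \infty$, so both sides are finite and I may subtract $\sum_i i\,\pi(i)$ to obtain the clean relation $\sum_{i \ge 0} \pi(i)\,(\E_i[Z_1] - i) = 0$. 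Everything then reduces to understanding the summand $\E_i[Z_1] - i$.

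The crux is the computation of $\E_i[Z_1]$ for $i \ge M-1$. Recalling that under $\P_i$ the variable $Z_1$ has the law of $F_{i+1}$, the number of tails preceding the $(i+1)$-st head in the sequence $\{\xi_j\}$, I would argue that for $i \ge M-1$ the $(i+1)$-st head cannot occur before all $M$ special coins have been flipped, since reaching $i+1 \ge M$ heads requires at least $M$ flips. Conditioning on the number of heads $H = \xi_1 + \cdots + \xi_M$ obtained from the special coins, the remaining $i+1-H$ heads are collected with fair coins, each preceded by a mean-one geometric number of tails; hence
\[
\E_i[Z_1] = \E\big[(M - H) + (i+1-H)\big] = M + i + 1 - 2\,\E[H] = i + 1 - \delta,
\]
using $\E[H] = \sum_{j=1}^M p_j = (M+\delta)/2$. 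Thus $\E_i[Z_1] - i = 1 - \delta$ is constant on $\{i \ge M-1\}$. I expect this step to be the main obstacle, both in pinning down that the special coins are always exhausted (the boundary case $i = M-1$ needs the observation that $M$ heads demand at least $M$ flips) and in the bookkeeping that turns the geometric-tail count into the parameter $\delta$.

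Finally I would assemble the pieces. Splitting the relation $\sum_i \pi(i)(\E_i[Z_1]-i)=0$ at $i = M-1$ and inserting $\E_i[Z_1]-i = 1-\delta$ for $i \ge M-1$ yields
\[
\sum_{k=0}^{M-2} \pi(k)\,(\E_k[Z_1] - k) + (1-\delta)\sum_{i \ge M-1}\pi(i) = 0.
\]
Using $\sum_{i \ge M-1}\pi(i) = 1 - \sum_{k=0}^{M-2}\pi(k)$ and moving the constant term to the other side gives
\[
\delta - 1 = \sum_{k=0}^{M-2}\pi(k)\,\big(\E_k[Z_1] - k - 1 + \delta\big),
\]
which is exactly the claimed identity.
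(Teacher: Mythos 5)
Your proposal is correct and follows essentially the same route as the paper: stationarity gives $\sum_i i\,\pi(i) = \sum_i \pi(i)\,\E_i[Z_1]$, the identity $\E_i[Z_1] = i+1-\delta$ for $i \ge M-1$ (which you prove by conditioning on the number of heads among the $M$ special coins, exactly as in the paper's appendix) collapses the tail of the sum, and the normalization $\sum_{i\ge M-1}\pi(i) = 1-\sum_{k=0}^{M-2}\pi(k)$ finishes the rearrangement. Your explicit appeal to $\E_\pi[Z_0]<\infty$ to justify the subtraction is a small point of added care over the paper's presentation, but the argument is the same.
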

\begin{rem}
 Note that for any fixed excited random walk parameters, $M$ and $\vec{p}$, the expectations $\E_k[Z_1] = \sum_{j=0}^{\infty} j p(k,j)$ appearing in Lemma \ref{genabc} can be explicitly calculated. 
\end{rem}

 \begin{proof}
 Due to properties of the stationary distribution we know:
 \begin{align*}
 \E_\pi[Z_0] &= \E_\pi[Z_1] 
 \end{align*}
or equivalently
\begin{equation}\label{EZ0EZ1}
 \sum_{k=0}^\infty k\pi(k) = \sum_{k=0}^\infty \pi(k)\E_k[Z_1]
\end{equation}
In general, the expectations $\E_k[Z_1]$ have to be calculated individually using the transition probabilities for the Markov chain $\{Z_n\}_{n\geq 0}$. However, Basdevant and Singh showed in \cite[Lemma 3.3]{basdevant2008}
that the following pattern emerges when $k\geq M-1$. 
\begin{equation}\label{EkZ}
 \E_k[Z_1] = k+1-\delta, \quad \forall k\geq M-1. 
\end{equation}
(We provide a proof of \eqref{EkZ} in the Appendix.)
Using this, and splitting both sums in \eqref{EZ0EZ1} into $k\leq M-2$ and $k\geq M-1$ we obtain 
\[
 \sum_{k=0}^{M-2} k\pi(k) +\sum_{k=M-1}^{\infty} k\pi(k) = \sum_{k=0}^{M-2} \pi(k)\E_k[Z_1] + \sum_{k=M-1}^{\infty} (k+1-\delta)\pi(k)
\]
Noting that $\sum_{k=M-1}^{\infty} k\pi(k)$ appears on both sides, we reduce this to 
\begin{align*}
 \sum_{k=0}^{M-2} k\pi(k) &= \sum_{k=0}^{M-2} \pi(k)\E_k[Z_1] +  (1-\delta)\sum_{k=M-1}^{\infty}\pi(k) \\
&= \sum_{k=0}^{M-2} \pi(k)\E_k[Z_1] +  (1-\delta) - (1-\delta)\sum_{k=0}^{M-2} \pi(k)
\end{align*}
where in the last equality we used that $\sum_{k=M-1}^{\infty}\pi(k) = 1-\sum_{k=0}^{M-2} \pi(k)$ because $\pi$ is a probability distribution.
The statement of the lemma is then obtained by simplifying. 
\end{proof}


As a special case, when there are $M=3$ cookies Lemma \ref{genabc} gives a simple linear relation between $\pi(0)$ and $\pi(1)$. 
 \begin{cor}
 \label{abc}
     For $M=3$ cookies with strength $\vec{p} = (p_1,p_2,p_3)$, the following linear equation follows from above: $$a\pi(0) + b\pi(1) = c$$ where (recalling the notation $\delta_j = 2p_j-1$) we have 
     \begin{align*}
     a &:= p_1(\delta_2 + \delta_3) + p_2\delta_3(1-p_1) \\
     b &:= \delta_3p_1p_2 \\
     c &:= \delta - 1
     \end{align*}
 \end{cor}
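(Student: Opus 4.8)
The plan is to specialize Lemma~\ref{genabc} to $M=3$ and then evaluate the two conditional expectations $\E_0[Z_1]$ and $\E_1[Z_1]$ explicitly. Setting $M=3$, the sum on the right-hand side of Lemma~\ref{genabc} runs over $k=0,1$, so the identity reads
\[
\delta - 1 = \pi(0)\bigl(\E_0[Z_1] - 1 + \delta\bigr) + \pi(1)\bigl(\E_1[Z_1] - 2 + \delta\bigr).
\]
Comparing with the desired relation $a\pi(0)+b\pi(1)=c$, I immediately read off $c=\delta-1$, and it remains to show that $\E_0[Z_1]-1+\delta$ simplifies to $a$ and that $\E_1[Z_1]-2+\delta$ simplifies to $b$.

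For the first coefficient I would compute $\E_0[Z_1]=\sum_{j\ge 0} j\,p(0,j)$ directly from the transition probabilities listed in Example~\ref{ex:tp}. The only nontrivial input is the tail sum $\sum_{k\ge 3} k/2^{k-2}$, which evaluates to $4$ (using $\sum_{k\ge 1}k/2^k = 2$). This gives $\E_0[Z_1] = (1-p_1)p_2 + 2(1-p_1)(1-p_2)p_3 + 4(1-p_1)(1-p_2)(1-p_3)$. Substituting $\delta = (2p_1-1)+(2p_2-1)+(2p_3-1)$ and expanding, the constant term and the $p_3$ term both cancel, and after rewriting the surviving terms in terms of $\delta_2=2p_2-1$ and $\delta_3=2p_3-1$ one obtains exactly $a = p_1(\delta_2+\delta_3) + p_2\delta_3(1-p_1)$.

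The computation of $\E_1[Z_1]$ is where the real work lies, and I expect it to be the main obstacle. One route is to differentiate the explicit generating function $\E_1[s^{Z_1}]$ from Example~\ref{ex:pgf} and set $s=1$, but that expression is cumbersome. I would instead use the coin-flip description of the chain: started from state $1$, the variable $Z_1$ has the law of $F_2$, the number of tails preceding the second heads in the sequence $\{\xi_j\}$. Since exactly one heads occurs among the positions before the second heads, we have $F_2 = N_2 - 2$, where $N_2$ is the position of the second heads, so $\E_1[Z_1] = \E[N_2]-2$. Conditioning on the number of heads among the first three (biased) flips --- and using that the waiting time for a heads from a fair coin has mean $2$ --- reduces $\E[N_2]$ to a short case analysis. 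Carrying this out yields $\E_1[Z_1] = 5 - 2(p_1+p_2+p_3) - p_1p_2 + 2p_1p_2p_3$, whence $\E_1[Z_1]-2+\delta = p_1p_2(2p_3-1) = \delta_3 p_1 p_2 = b$, completing the proof. The only delicate points are the bookkeeping in these expansions and verifying that the surviving terms factor into the stated closed forms of $a$ and $b$.
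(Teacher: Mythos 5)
Your proposal is correct and follows essentially the same route as the paper: specialize Lemma~\ref{genabc} to $M=3$ and substitute explicit values of $\E_0[Z_1]$ and $\E_1[Z_1]$, with your computation of $\E_1[Z_1]$ via the position of the second heads being a sound (and minor) variation on the paper's direct summation over transition probabilities. One point worth flagging: your value $\E_1[Z_1]=2-\delta+p_1p_2\delta_3$ is in fact the correct one --- it is the only sign consistent with $b=\delta_3p_1p_2$, and it gives $\E_1[Z_1]=0$ when $\vec{p}=(1,1,1)$ as it must --- whereas the paper's displayed intermediate formula $\E_1[Z_1]=2-\delta-p_1p_2\delta_3$ contains a sign typo.
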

\begin{proof}
 When $M=3$, the equation in Lemma \ref{genabc} becomes 
 %
 \begin{align}
     \label{abc-unsubbed}
     \delta - 1 = [\E_0[Z_1]+\delta-1]\cdot \pi(0)    +    [\E_1[Z_1]+\delta-2]\cdot \pi(1)
 \end{align}
 Next, note that $E_0[Z_1]$ and $E_1[Z_1]$ can be explicitly calculated from the known transition  probabilities for $Z$ (compare with Examples \ref{ex:tp} and \ref{ex:pgf} above). For example,
\begin{align*}
    \E_0[Z_1]   &= 0(p_1) + 1(1-p_1)p_2 + 2(1-p_1)(1-p_2)p_3 + (1-p_1)(1-p_2)(1-p_3) \sum_{k=3}^{\infty}\frac{k}{2^{k-2}}\\ 
                &= (1-p_1)p_2 + 2(1-p_1)(1-p_2)p_3 + 4(1-p_1)(1-p_2)(1-p_3)\\
                &= 4-4p_1-3p_2-2p_3+3p_1p_2+2p_1p_3+2p_2p_3-2p_1p_2p_3,
\end{align*}   
and similarly it can be shown that  
\begin{align*}
     \E_1[Z_1] 
       &= 5-2(p_1+p_2+p_3)-p_1 p_2(2p_3-1) = 2-\delta-p_1p_2\delta_3.
 \end{align*}
Substituting these formulas for $\E_0[Z_1]$ and $\E_1[Z_1]$ into (\ref{abc-unsubbed}) and simplifying we obtain the statement of the corollary.
\end{proof}

\section{Bounds on the Speed}

Theorem \ref{speed2} and Lemma \ref{genabc} combined show that the speed $V_{M,\vec{p}}$ of an excited random walk with $\delta > 2$ can be computed in terms of only the unknown values $\pi(0),\pi(1),\ldots,\pi(M-3)$.
Actually computing this function, however, is rather involved as especially computing the $B''(1)$ is a tedious task. Thus, for the remainder of the paper we will restrict ourselves to the case $M=3$ so that explicit computations can be done. 
With the aid of Mathematica to compute the derivatives in $B''(1)$, we were able to show the following. 
\begin{thrm}\label{Vpi0}
For an excited random walk with $M=3$ cookies of strengths $\vec{p} = (p_1,p_2,p_3)$, if $\delta>2$ the limiting speed is equal to 
 \begin{equation}\label{Vpiform}
V_{3,\vec{p}} = \frac{f_1}{f_2+f_3\cdot\pi(0)},
\end{equation}
where 
\begin{align*}
f_1 &= 2 p_1 + 2 p_2 + 2 p_3 - 5\\
f_2 &= 9 + 8(p_1p_2 + p_1p_3+p_2p_3) -10(p_1+p_2+p_3)\\
f_3 &= 2(2p_3-1)(p_1+p_2-3p_1p_2)
\end{align*}
\end{thrm}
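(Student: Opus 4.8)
The plan is to start from the closed form for the speed in Theorem~\ref{speed2}, namely $V_{3,\vec p} = \frac{\delta-2}{\delta-2+B''(1)}$, and to reduce everything to a single rational expression in $\pi(0)$. The first observation, which already pins down the numerator, is that for $M=3$ one has $\delta = \sum_{j=1}^3(2p_j-1) = 2(p_1+p_2+p_3)-3$, so that
\[
f_1 = \delta - 2 = 2p_1+2p_2+2p_3-5 .
\]
Thus it remains to show that $\delta-2+B''(1)$ equals $f_2+f_3\,\pi(0)$ with the stated polynomials $f_2,f_3$; in effect, the whole content of the theorem is the claim that $\delta-2+B''(1)$ is an \emph{affine} function of $\pi(0)$ with \emph{polynomial} coefficients.

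First I would specialize the formula \eqref{Beq} for $B(s)$ to $M=3$ (so $M-1=2$ and the sum runs over $k=0,1$), giving
\begin{align*}
B(s) = 1 - \frac{1}{(2-s)^2\,\E_2[s^{Z_1}]} &+ \pi(0)\left(\frac{\E_0[s^{Z_1}]}{(2-s)^2\,\E_2[s^{Z_1}]} - 1\right) \\
&+ \pi(1)\left(\frac{\E_1[s^{Z_1}]}{(2-s)^2\,\E_2[s^{Z_1}]} - \frac{1}{2-s}\right).
\end{align*}
Into this I would substitute the explicit rational expressions for $\E_0[s^{Z_1}]$, $\E_1[s^{Z_1}]$ and $\E_2[s^{Z_1}]$ computed in Example~\ref{ex:pgf}. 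Since each generating function $\phi_k(s):=\E_k[s^{Z_1}]$ satisfies $\phi_k(1)=1$, $\phi_k'(1)=\E_k[Z_1]$ and $\phi_k''(1)=\E_k[Z_1^2]-\E_k[Z_1]$, differentiating $B$ twice and evaluating at $s=1$ produces $B''(1)$ as a concrete expression in the first two moments of $Z_1$ under $\P_0,\P_1,\P_2$ --- all of which are explicit polynomials in $p_1,p_2,p_3$ --- and \emph{linear} in the two unknowns $\pi(0)$ and $\pi(1)$. This differentiation is the step I would delegate to a computer algebra system, as the statement of the theorem notes.

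The next step is to eliminate $\pi(1)$. Corollary~\ref{abc} supplies the linear relation $a\pi(0)+b\pi(1)=c$ with $a,b,c$ explicit, so I would write $\pi(1) = (c-a\pi(0))/b$ and substitute into the expression for $B''(1)$. This makes $B''(1)$, and hence $\delta-2+B''(1)$, an affine function of $\pi(0)$ whose coefficients are \emph{a priori} only rational functions of $p_1,p_2,p_3$, carrying the denominator $b = \delta_3\,p_1p_2$. Substituting into $V=\frac{\delta-2}{\delta-2+B''(1)}$ and simplifying should then yield the stated form.

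The main obstacle is precisely this final algebraic simplification: one must verify that the denominator $b=\delta_3 p_1 p_2$ introduced by eliminating $\pi(1)$ cancels completely, so that the coefficients $f_2$ and $f_3$ are genuine \emph{polynomials} rather than rational functions. The factor $\delta_3=2p_3-1$ appearing in the target $f_3=2(2p_3-1)(p_1+p_2-3p_1p_2)$ is exactly the fingerprint of this cancellation against $b$. Once it is confirmed and the reduced numerator is collected, reading off $f_2 = 9+8(p_1p_2+p_1p_3+p_2p_3)-10(p_1+p_2+p_3)$ and $f_3 = 2(2p_3-1)(p_1+p_2-3p_1p_2)$ is immediate. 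Because every intermediate quantity is an explicit polynomial in $p_1,p_2,p_3$, this amounts to a finite, mechanical verification with no conceptual difficulty beyond bookkeeping, which is why the computation is carried out with the aid of Mathematica.
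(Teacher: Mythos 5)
Your proposal is correct and follows essentially the same route as the paper: specialize the Basdevant--Singh formula $V=\frac{\delta-2}{\delta-2+B''(1)}$ to $M=3$, compute $B''(1)$ (via Mathematica) from the explicit generating functions of Example~\ref{ex:pgf} as an affine function of $\pi(0)$ and $\pi(1)$, and eliminate $\pi(1)$ using the linear relation of Corollary~\ref{abc}. Your remark that one must check the denominator $b=\delta_3 p_1p_2$ cancels to leave polynomial coefficients is a fair and correctly identified bookkeeping point, consistent with the paper's computer-assisted verification.
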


The formula in equation \eqref{Vpiform} doesn't quite calculate $V_{3,\vec{p}}$ explicitly since we do not known the value of $\pi(0)$. However, the following lemma shows that we can easily use this formula to compute upper and lower bounds on the speed. 

\begin{lemma}\label{monotone}
Let $f_1,f_2$ and $f_3$ be as in Theorem \ref{Vpi0}. Then, if $\delta = \sum_{j=1}^3 (2p_j-1) > 2$ the function $x \mapsto \frac{f_1}{f_2+ f_3 x}$ is strictly positive and increasing for $x \in [0,1]$. 
\end{lemma}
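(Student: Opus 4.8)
The plan is to treat $g(x) := \dfrac{f_1}{f_2 + f_3 x}$ as an honest rational function of $x$ and to reduce both assertions to three sign facts about the coefficients: (i) $f_1 > 0$, (ii) $f_3 < 0$, and (iii) $f_2 + f_3 x > 0$ for every $x \in [0,1]$. Granting these, the conclusions are immediate: positivity holds because numerator and denominator are both positive, and since $g'(x) = -f_1 f_3\,(f_2+f_3 x)^{-2}$, the facts $f_1 > 0$ and $f_3 < 0$ give $-f_1 f_3 > 0$, so $g'(x) > 0$ on $[0,1]$ and $g$ is strictly increasing. So the whole lemma rests on verifying (i)--(iii).

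First I would recast the hypothesis. Since $\delta = 2(p_1+p_2+p_3) - 3$, the assumption $\delta > 2$ is exactly $p_1+p_2+p_3 > 5/2$. Because each $p_j \in (0,1)$, the other two coordinates contribute less than $2$, forcing $p_j > 1/2$ for every $j$, and likewise $p_i + p_j > 3/2$ for each pair. The first observation settles (i) at once, since $f_1 = 2(p_1+p_2+p_3) - 5 > 2\cdot\frac{5}{2} - 5 = 0$, and it also gives $2p_3 - 1 > 0$, which I will need for (ii).

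For (ii) and (iii) the cleanest device is the substitution $p_j = 1 - a_j$, so that $a_j \in (0,1/2)$ and the hypothesis becomes $a_1 + a_2 + a_3 < 1/2$. A short computation rewrites the second factor of $f_3$ as $p_1 + p_2 - 3p_1 p_2 = -\bigl(1 - 2(a_1+a_2) + 3a_1 a_2\bigr)$; since $a_1 + a_2 < a_1+a_2+a_3 < 1/2$, the bracketed quantity is positive, and combined with $2p_3 - 1 > 0$ this gives $f_3 = 2(2p_3-1)(p_1+p_2-3p_1p_2) < 0$, which is (ii). For (iii), note that $f_3 < 0$ makes $x \mapsto f_2 + f_3 x$ decreasing, so its minimum over $[0,1]$ occurs at $x=1$ and it suffices to check $f_2 + f_3 > 0$. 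Expanding $f_2 + f_3$ in the $a_j$ variables collapses it to the manifestly positive expression $1 - 2(a_1+a_2+a_3) + 2a_1 a_2 + 12 a_1 a_2 a_3$, which is positive because $a_1+a_2+a_3 < 1/2$ makes the first two terms positive and the remaining terms are nonnegative.

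The main obstacle is entirely bookkeeping rather than conceptual: carrying out the two polynomial collapses in the $a_j$ variables, especially the claim that $f_2 + f_3$ simplifies to $1 - 2(a_1+a_2+a_3) + 2a_1 a_2 + 12 a_1 a_2 a_3$, which requires that all the $a_1 a_3$ and $a_2 a_3$ cross terms cancel exactly. This computation is routine but slightly delicate; once it is done, facts (i)--(iii) are in hand and the lemma follows with no further analysis.
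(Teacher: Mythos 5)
Your proof is correct, and I checked the two polynomial identities you flagged as the delicate steps: with $p_j=1-a_j$ one indeed gets $p_1+p_2-3p_1p_2=-\bigl(1-2(a_1+a_2)+3a_1a_2\bigr)$ and $f_2+f_3=1-2(a_1+a_2+a_3)+2a_1a_2+12a_1a_2a_3$ (the $a_1a_3$ and $a_2a_3$ terms do cancel, since $f_2+f_3=9-12p_1-12p_2-10p_3+14p_1p_2+12p_1p_3+12p_2p_3-12p_1p_2p_3$). Your overall skeleton is the same as the paper's: everything is reduced to the three sign facts $f_1>0$, $f_3<0$, and $f_2+f_3>0$, with the derivative formula $g'(x)=-f_1f_3(f_2+f_3x)^{-2}$ doing the rest. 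Where you genuinely diverge is in how the two nontrivial sign facts are verified. The paper proves $p_1+p_2-3p_1p_2<0$ by fixing $t=p_1+p_2\in(3/2,2)$, observing that $3p_1^2+(1-3p_1)t$ is convex in $p_1$, and checking the endpoints $p_1=t-1$ and $p_1=1$; and it proves $f_2+f_3>0$ by showing the expression is nonnegative on the critical surface $\delta=2$ (via the factorization $4(1-p_1)(1-p_2)(3p_1+3p_2-4)$ after substituting $p_3=\tfrac52-p_1-p_2$) and that all three partial derivatives are positive when $\delta>2$. Your single substitution $p_j=1-a_j$, which converts the hypothesis into $a_1+a_2+a_3<1/2$ and both target expressions into manifestly signed polynomials, replaces these two separate arguments with one routine expansion; it is shorter and arguably more transparent, at the cost of hiding the geometric content (the behavior on the boundary $\delta=2$) that the paper's argument makes visible. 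Either route is a complete proof.
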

\begin{proof}
 If $g(x) = \frac{f_1}{f_2+ f_3 x}$, then $g'(x) = \frac{-f_1 f_3}{(f_2+f_3 x)^2}$.
Thus, to show that $g(x)$ is decreasing we need only to show that $f_1 f_3 < 0$ when $p_1,p_2,p_3$ are such that $\delta > 2$. 
Note first of all that $\delta > 2$ is equivalent to $p_1+p_2+p_3 > \frac{5}{2}$. Therefore, 
\[
 f_1 =2(p_1+p_2+p_3) - 5 > 0, 
\]
and so it remains to show $f_3 < 0$. 
To see this, note that since $p_1,p_2$ and $p_3$ are each at most one then the condition $\delta>2$ implies that they are all strictly larger than $1/2$. Thus, $f_3=2(2p_3-1)(p_1+p_2-3p_1p_2)<0$ if $p_1+p_2-3p_1p_2 < 0$.  
When $\delta>2$, it follows that $p_1+p_2 \in (3/2,2)$. Therefore, if we fix $t \in (3/2,2)$ and if $p_1+p_2 = t$ then $p_1+p_2 - 3p_1p_2 = t-3 p_1(t-p_1) = 3p_1^2+(1-3p_1)t$ and we wish to show that this is negative for all $p_1 \in [t-1,1]$. However, since $ 3p_1^2+(1-3p_1)t$ is convex in $p_1$ we need only to check the value at the endpoints $p_1=t-1$ and $p_1=1$, and at both endpoints this evaluates to $3-2t < 0$. 
This completes the proof that $f_3 < 0$ whenever $\delta>2$ and thus also that $g(x)$ is decreasing for $x\in [0,1]$.  
Since we have already shown that $f_1>0$ and $f_3<0$ when $\delta>2$, it will follow that $g(x)$ is non-negative on $[0,1]$ if we can show that $f_2+f_3>0$ whenever $\delta > 2$. 
This will be accomplished by showing that
\begin{equation}\label{sumpos}
 f_2+f_3\geq 0 \quad\text{when } \delta = 2,
\end{equation}
and
\begin{equation}\label{partials}
 \frac{\partial}{\partial p_i} (f_2+f_3)>0, \quad \text{for } i=1,2,3 \text{ whenever } \delta >2.
\end{equation}

To show \eqref{sumpos}, note that if $\delta=2$ then $p_1+p_2+p_3 = \frac{5}{2}$. Therefore, substituting $p_3 = \frac{5}{2} - p_1-p_2$ into $f_2+f_3$ and then factoring we have
\begin{align*}
 (f_2+f_3)(p_1,p_2,\tfrac{5}{2}-p_1-p_2) 
&=-16+28p_1-12p_1^2+28p_2-40p_1p_2+12p_1^2p_2-12p_2^2+12p_1p_2^2 \\
&= 4(1-p_1)(1-p_2)(3p_1+3p_2-4). 
\end{align*}
However, if $\delta=2$ then $p_1+p_2 = \frac{5}{2}-p_3 \geq \frac{3}{2}$ and thus $3p_1+3p_2 - 4 \geq \frac{9}{2}-4 = \frac{1}{2}$. 
From this, the claim in \eqref{sumpos} follows. 

To show \eqref{partials}, note that direct computation of derivatives yields
\begin{align*}
 \frac{\partial(f_2+f_3)}{\partial p_1} &=-12+14p_2+12p_3-12p_2p_3 = 2p_2 -12(1-p_2)(1-p_3) \\
 \frac{\partial(f_2+f_3)}{\partial p_2} &=-12+14p_1+12p_3-12p_1p_3 = 2p_1 -12(1-p_1)(1-p_3) \\
 \frac{\partial(f_2+f_3)}{\partial p_3} &= -10 + 12p_1+12p_2-12p_1p_2 = 2-12(1-p_1)(1-p_2).
\end{align*}
For the partial derivative with respect to $p_1$, $\delta > 2$ implies that $p_3 > \frac{3}{2} - p_2$ so that 
\[
 (1-p_2)(1-p_3)<(1-p_2)(p_2-1/2) \leq \frac{1}{16}. 
\]
Also, since $\delta>2$ implies $p_2>1/2$ then we have that $\frac{\partial(f_2+f_3)}{\partial p_1} > 2(1/2)-12(1/16) = 1/4>0$. 
Similar arguments show that $\frac{\partial(f_2+f_3)}{\partial p_2} > 1/4$ and  $\frac{\partial(f_2+f_3)}{\partial p_3} > 5/4$ when $\delta > 2$. 
This completes the proof of \eqref{partials} and thus also the proof of the lemma. 
\end{proof}

Using Lemma \ref{monotone}, it follows that we can obtain upper and lower bounds on $V_{3,\vec{p}}$ by using the simple bounds $0 \leq \pi(0) \leq 1$; that is $\frac{f_1}{f_2} \leq V_{3,\vec{p}} \leq \frac{f_1}{f_2+f_3}$. 
However, we can get improved upper bounds on $\pi(0)$ by using the fact that $\pi$ is not just a probability distribution but also a stationary distribution for the Markov chain $\{Z_n\}_{n\geq 0}$.

\begin{lemma}\label{pi0bounds}
For an excited random walk with $M=3$ cookies of strengths $\vec{p} = (p_1,p_2,p_3)$, 
\[
\frac{c\cdot p_1p_2}{b\cdot (1-p_1)+a\cdot p_1p_2} \leq \pi(0) \leq \frac{c}{\frac{b\cdot (1-p_1)p_2}{1-((1-p_1)p_2p_3+p_1(1-p_2)p_3)}+a},
\]
where $a, b$, and $c$ are defined in Corollary \ref{abc}.
 \end{lemma}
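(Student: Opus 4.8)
The plan is to combine the single linear constraint $a\pi(0)+b\pi(1)=c$ supplied by Corollary~\ref{abc} with two one-sided inequalities relating $\pi(1)$ to $\pi(0)$, each extracted from a single component of the stationarity equation $\pi=\pi P$. Solving the linear relation for $\pi(0)=(c-b\pi(1))/a$, I first observe that under the standing assumption $\delta>2$ every $p_j>1/2$, so $\delta_2,\delta_3>0$ and hence both $a=p_1(\delta_2+\delta_3)+p_2\delta_3(1-p_1)$ and $b=\delta_3 p_1 p_2$ are strictly positive. Consequently an \emph{upper} bound on $\pi(1)$ will produce a \emph{lower} bound on $\pi(0)$, while a \emph{lower} bound on $\pi(1)$ will produce an \emph{upper} bound on $\pi(0)$. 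The whole proof is then a matter of producing those two bounds on $\pi(1)$.

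For the lower bound on $\pi(0)$, I would use the state-$0$ component of $\pi=\pi P$, namely $\pi(0)=\sum_{k\geq 0}\pi(k)\,p(k,0)$, and keep only the $k=0$ and $k=1$ terms, discarding the remaining nonnegative summands. Using $p(0,0)=p_1$ and $p(1,0)=p_1p_2$ from Example~\ref{ex:tp}, this yields $\pi(0)\geq \pi(0)p_1+\pi(1)p_1p_2$, i.e. $\pi(1)\leq (1-p_1)\pi(0)/(p_1p_2)$. Substituting this into $c=a\pi(0)+b\pi(1)$ and using $b>0$ gives $c\geq \pi(0)\bigl(a+b(1-p_1)/(p_1p_2)\bigr)$, which rearranges to exactly $\pi(0)\geq c\,p_1p_2/\bigl(b(1-p_1)+a\,p_1p_2\bigr)$, the asserted lower bound.

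For the upper bound on $\pi(0)$, I would instead use the state-$1$ component $\pi(1)=\sum_{k\geq 0}\pi(k)\,p(k,1)$, again retaining only the $k=0,1$ terms. With $p(0,1)=(1-p_1)p_2$ and $p(1,1)=(1-p_1)p_2p_3+p_1(1-p_2)p_3$ from Example~\ref{ex:tp}, this gives $\pi(1)\bigl(1-p(1,1)\bigr)\geq \pi(0)(1-p_1)p_2$; since $p(1,1)<1$ one may divide to obtain the lower bound $\pi(1)\geq \pi(0)(1-p_1)p_2/\bigl(1-p(1,1)\bigr)$. Feeding this into $c=a\pi(0)+b\pi(1)$ and again using $b>0$ and solving for $\pi(0)$ delivers the claimed upper bound. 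There is no genuinely hard step, since the two essential ingredients — the linear relation of Corollary~\ref{abc} and the explicit transition probabilities — are already available; the only points demanding care are the sign checks ($a,b>0$ and $1-p(1,1)>0$), which guarantee that the truncation inequalities propagate in the intended direction and that no inequality reverses upon division. These are immediate from $\delta>2\Rightarrow p_j>1/2$.
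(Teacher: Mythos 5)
Your proof is correct and follows essentially the same route as the paper: truncate the stationarity equations $\pi(0)=\sum_k\pi(k)p(k,0)$ and $\pi(1)=\sum_k\pi(k)p(k,1)$ to the $k=0,1$ terms, and combine with the linear relation $a\pi(0)+b\pi(1)=c$ of Corollary~\ref{abc}, with the same sign checks making the substitutions legitimate. One small slip: after inserting $\pi(1)\leq(1-p_1)\pi(0)/(p_1p_2)$ into $c=a\pi(0)+b\pi(1)$ you should get $c\leq\pi(0)\bigl(a+b(1-p_1)/(p_1p_2)\bigr)$, not $c\geq\cdots$; your final rearrangement to the asserted lower bound is nonetheless the right one.
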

 \begin{proof}
Since $\pi$ is the stationary distribution of a Markov chain with transition probability matrix $P = (p(i,j))_{i,j\geq 0}$, then we know that the (infinite) matrix equation $\pi = \pi P$ holds. 
That is, 
\[ 
 \pi(i) = \sum_{k=0}^\infty \pi(k)p(k,i), \quad \text{for any } i\geq 0.
\]
If we drop all but the first two terms in the sum on the right we then obtain the inequality
 \begin{equation}
 \pi(i)    
\geq \pi(0)p(0,i)+\pi(1)p(1,i) \label{piiLowerBound}
 \end{equation}
 where $p(i,j)$ is the transition probability from state $i$ to state $j$ in the backward branching process.
 %
 For a lower bound on $\pi(0)$ we use $i=0$ in \eqref{piiLowerBound} and then Corollary \ref{abc} to get 
 \begin{align*}
 \pi(0)&\ge  p(0,0)\pi(0)+ p(1,0)\pi(1)\\
 &= p(0,0)\pi(0) + p(1,0) \frac{c-a\pi(0)}{b}.
 \end{align*}
 Then, solving for $\pi(0)$ and using the formulas for the transition probabilities yields the lower bound
 \begin{equation}\label{lowerPi0} 
\pi(0)     \geq    \frac{c\cdot p(1,0)}{b\cdot (1-p(0,0))+a\cdot p(1,0)} =  \frac{c\cdot p_1p_2}{b\cdot (1-p_1)+a\cdot p_1p_2}. 
\end{equation}

For an upper bound we repeat the same process this time using $i=1$ in \eqref{piiLowerBound} and applying Corollary \ref{abc} to get 
\[
  \frac{c-a \pi(0)}{b} \geq \pi(0) p(0,1) + \left(\frac{c-a \pi(0)}{b} \right) p(1,1)
\]
Solving this for $\pi(0)$ and then using the formulas for the transition probabilities yields the upper bound 
\begin{equation}\label{upperPi0}
 \pi(0)    \leq    \frac{c}{\frac{b\cdot p(0,1)}{1-p(1,1)}+a} = \frac{c}{\frac{b\cdot (1-p_1)}{1-((1-p_1)p_2p_3+p_1(1-p_2)p_3)}+a}
\end{equation}
\end{proof}

By applying Lemmas \ref{monotone} and \ref{pi0bounds} to Theorem \ref{Vpi0}, 
we can obtain explicit upper and lower bounds on the speed of excited random walks with $M=3$ cookies. The upper/lower bounds are obtained by substituting the respective upper/lower bounds for $\pi(0)$ in Lemma \ref{pi0bounds} into the formula for the speed in \eqref{Vpiform}. In the special case of $p_1=p_2=p_3 > \frac{5}{6}$ this gives the 
following explicit formula for upper and lower bounds on the speed.
\begin{equation}\label{V3pbounds}
 \frac{(6 p-5) \left(p^2-2 p-1\right)}{24 p^4-42 p^3-3 p^2+28 p-9} \leq V_{3,(p,p,p)} \leq \frac{(6 p-5) \left(2 p^4-7 p^3+5 p^2+p-3\right)}{48 p^6-156 p^5+180 p^4-61 p^3-53 p^2+51 p-11}. 
\end{equation}
As is seen in Figure \ref{boundplot}, these upper and lower bounds are remarkably close together. In fact, using NMaxValue and NArgMax (Mathematica's numerical optimization functions) one sees that the maximum difference between the uppper and lower bounds is at most $0.010326$ and is obtained approximately at $p=0.86649$. 

In the general case with $M=3$ cookies, the upper and lower bounds are again explicit rational functions in $(p_1,p_2,p_3)$, but these rational functions are extremely long and so we leave it to the interested reader to compute these upper bounds explicitly (with the aid of Mathematica or some other computer algebra software). 
We note, however, that even in this more general case the upper and lower bounds are remarkably close together. Indeed, again using Mathematia's NMaxValue and NArgMax functions we obtain that the upper and lower bounds differ by at most $0.0194564$ and that this maximum is obtained at approximately $\vec{p}=(0.913811, 0.666396, 1)$. 

\begin{figure}
\begin{center}
 \includegraphics[width=0.4\textwidth]{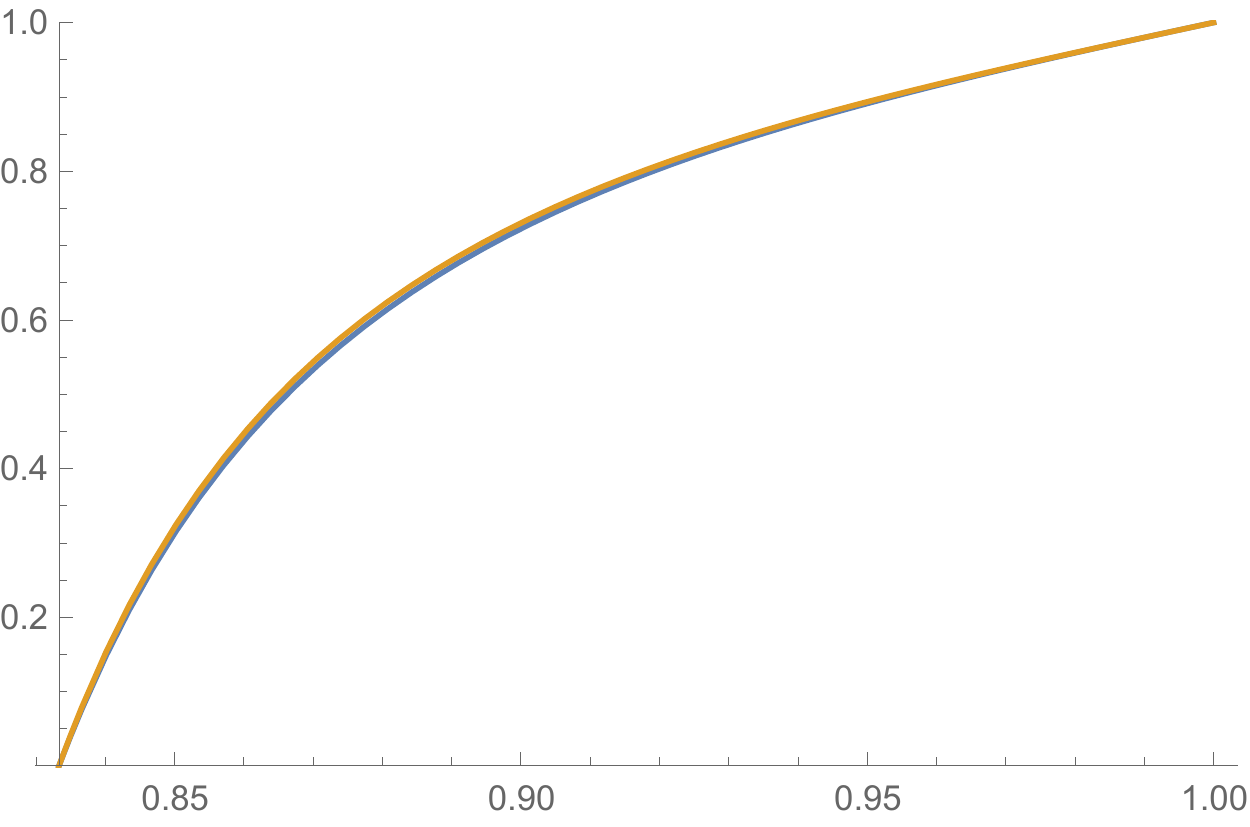}
\hspace{0.1\textwidth}
  \includegraphics[width=0.4\textwidth]{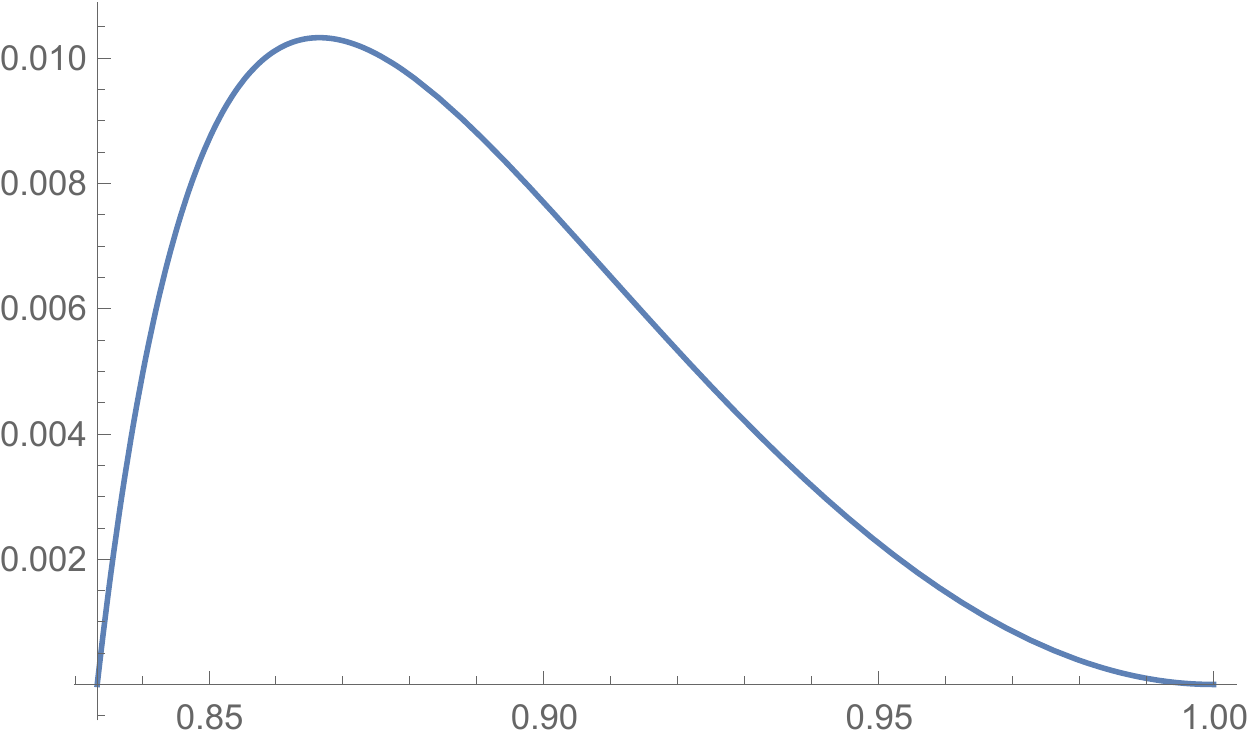}
\end{center}
\caption{On the left is a plot of the upper and lower bounds for $V_{3,(p,p,p)}$ given in \eqref{V3pbounds}. The upper and lower bounds are so close as to be nearly indistinguishable, and so on the right we plot the difference between the upper and lower bounds.}\label{boundplot}
\end{figure}

\section{Conclusion}
Basdevant and Singh showed that the speed of an excited random walk with $M$ cookies per site can be expressed in terms of the expected value of the stationary distribution $\pi$ of a certain Markov chain on $\Z_+$. 
By using some recursions on the probability generating function of $\pi$ that were obtained by Basdevant and Singh, we were able to show that for any fixed values of the parameters $p_1,p_2,\ldots,p_M$, the speed can be expressed as an explicit function of only the $M-2$ unknown values $\pi(0),\pi(1),\ldots,\pi(M-3)$. 
In the case of $M=3$ there is only one unknown parameter, $\pi(0)$ and we can therefore obtain bounds on the speed by obtaining explicit bounds on $\pi(0)$. 
The bounds we obtain in the case $M=3$ are very close together, but an exact computation of the speed is at this point still out of reach. 

We conclude this paper by stating some remaining open questions related to the results in this paper. 
\begin{enumerate}
 \item Can one implement the methods developed in this paper to obtain explicit upper and lower bounds on the speed $V_{M,\vec{p}}$ when $M\geq 4$? The main difficulty here will be that instead of optimizing a function of one variable over an interval one will need to find the minimum and maximum of a function of $M-2$ over a $(M-2)$-dimensional region. 
 \item For any fixed $M$, is the function $(p_1,p_2,\ldots,p_M)\mapsto V_{M,(p_1,p_2,\ldots,p_M)}$ differentiable in the region where $\delta = \sum_{j=1}^M (2p_j-1) > 2$? It was shown in \cite{basdevant2008} for critical $\vec{p}= (p_1,p_2,\ldots,p_M)$ (that is where $\delta = 2$) the speed function $\vec{p}\mapsto V_{M,\vec{p}}$ has a positive ``right derivative'' (that is, the directional derivative is positive in all directions $\vec{u}$ pointing toward the interior of the region where $\delta > 2$).
For instance, this implies that $p\mapsto V_{3,(p,p,p)}$ has a positive right derivative at $p=5/6$. Since the explicit upper and lower bounds in \eqref{V3pbounds} have the same derivative at $p=1$, our results show that $p\mapsto V_{3,(p,p,p)}$ is differentiable at $p=1$ (with derivative equal to 2). It remains open, however, to show that $V_{3,(p,p,p)}$ is differentiable in $(5/6,1)$.   
\end{enumerate}

\section*{Acknowledgement}

This research was conducted during the 2016 Purdue Research in Mathematics Experience (PRiME) undergraduate math REU. All of the participants are grateful for the support of PRiME provided by NSF grant DMS-1560394 and by the Mathematics Department at Purdue University. 

\appendix

\section{Proof of \eqref{EkZ}}\label{k+1-delta}
We will now give a proof that $\E_k[Z_1]=k+1-\delta$ for all $k\geq M-1$.
\begin{proof}
We will compute $\E_k[Z_1]$ by conditioning on $S_M =\sum_{j=1}^M \xi_j$ (the number of successes in the first $M$ Bernoulli trials).
\begin{equation}\label{EkZcond}
 \E_k[Z_1] = \sum_{i=0}^M \P\left(S_M=i\right) \E\left[ Z_1 \, | \, Z_0 =k, \text{ and } S_M = i \right].
\end{equation}
Recall when $Z_0=k$ that $Z_1$ is the number of ``failures'' before the $(k+1)$-st ``success'' in the sequence of Bernoulli trials. Given that $S_M = i$ we know that there are $i$ successes and $M-i$ failures in the first $M$ trials, and thus $Z_1$ is $M-i$ plus the number of failures before the $(k+1-i)$-th success in a sequence of Bernoulli($1/2$) trials. Since the number of failures before the $(k+1-i)$-th success is a NegativeBinomial($k+1-i,1/2$) random variable which has mean $k+1-i$, we can therefore conclude that 
\[
 \E\left[ Z_1 \, | \, Z_0 =k, \text{ and } S_M = i \right] = M-i+(k+1-i) = M+k+1-2i. 
\]
Plugging this in to \eqref{EkZcond} we obtain that
\begin{align*}
\E_k[Z_1] &= \sum_{i=0}^M \P\left(S_M=i\right)\cdot(M+k+1-2i)\\
          &= M+k+1-2 \sum_{i=0}^M i\cdot\P\left(S_M=i\right)\\
          &= M+k+1-2\E\left[S_M\right] \\
          &= M+k+1-2\sum_{j=1}^M\E\left[\xi_j\right]\\
          &= M+k+1-2\sum_{j=1}^M p_j\\
          &= (k+1)-\left(\sum_{j=1}^M 2p_j-1\right)\\
          &= k+1-\delta.
\end{align*}
\end{proof}

\bibliographystyle{alpha}
\bibliography{finalbib}

\end{document}